\newtheorem{thm}{Theorem}
\newtheorem{prop}[thm]{Proposition}
\begin{document}

\title{Capacity Expansion Planning of Wind Power Generation in A Market Environment with Topology Control}




\author[1]{Yifan Wang \thanks{santayifan@hotmail.com}}
\author[2]{Bo Zeng \thanks{bzeng@pitt.edu}}
\author[1]{Shixin Liu \thanks{sxliu@mail.neu.edu.cn}}

\affil[1]{College of Information Science and Engineering, Northeastern University, Shenyang, Liaoning, China}
\affil[2]{Department of Industrial Engineering, University of Pittsburgh, Pittsburgh, PA, USA}

\date{December, 2016}
\maketitle

\textbf{Abstract}: Wind power integration is an essential problem for modern power industry. In this paper, we develop a novel bilevel mixed integer  optimization model to investigate wind power generation planning problem in an electricity market environment with topology control operations. Different from existing formulations, the lower level market clearing problem introduces binary variables to model line switching decisions, which have been proven very effective to improve transmission capability under different load profiles. To solve this challenging bilevel MIP, a recent decomposition method is customized and a couple of enhancement techniques based on grid structure are designed. Through computing instances from typical IEEE test beds,  our solution methods demonstrate a strong solution capacity. Also, we observe that applying topology control on a small number of lines could be very helpful to reduce wind power curtailment and improve the wind penetration level.

\textbf{Keywords}: capacity expansion planning; electricity market; bilevel optimization; reformulation-and-decomposition; topology control

\section{Introduction}
\label{intro}

\subsection{Background and Motivation}
Wind power generation has become a primary renewable energy source since 2000. It actually is treated as a most effective approach by energy industry to reduce greenhouse gas emission, to mitigate climate change, and to achieve sustainability. Now, almost all major countries have established long-term goals to promote the development of wind power generation facilities and the integration of wind power into power systems. For example, the U.S. Department of Energy (DOE) has set a target that 20\% electricity supply in 2030 should be from wind generation \cite{lindenberg200820}.  To address its serious air pollution issue  and also to support the increasing demand on electricity, China has considered wind power is the most critical component in its energy development plan. A recent report shows that its goal is to generate 25\% total electricity from wind power by 2030 \cite{davidson2016modelling}.

To achieve those goals, the development of wind power generation has been grown up drastically. As shown in Figure \ref{fig:fig1}, the cumulative installed wind power generation capacity of the world grows from 17,400 MW in 2000 to 432,833 MW in 2015. North American and European Union countries were the pioneers on wind energy development and utilization before they were replaced by China in 2009. Up to 2015, China now has more than 145 GW of wind power capacity installed, which is more than the total of the European Union \cite{GWEC} and is twice as much as of that in the United States. Actually, the fast development of wind power in China has not been slowed down yet. According to the information released by National Energy Administration of China  \cite{NEA}, the newly installed wind power generation capacity in 2015 is about 32.97 GW, which is more  than 20\% of its total capacity in 2015.

Although the commercial-scale wind power generation facilities have been widely deployed, it still remains as a challenging issue to effectively integrate wind power into power grid. Basically, there are two reasons. One is that the nature of wind power, i.e., its variability, intermittency, and less controllable characteristics, make it less compatible and dispatchable in a grid, where the whole system should be highly reliable. We note that such difficulty can be mitigated by
utilizing better hardware, e.g., more flexible generation assets, and
software, e.g., sophisticated scheduling and management methods, to improve the controllability of wind power generation. Another one is that the long physical distances between wind resources and load centers require for strong power delivery systems, which are often insufficient or congested to absorb wind power. As shown in  Figure \ref{fig:wr_pd}, the most resourceful areas in China are its northern and western parts while most people are living in the southern and eastern parts. Clearly, such disparity makes it hard to fully dispatch wind power, even it is generously subsidized to have very low cost. According to \cite{lewis2016wind}, in 2015, the curtailment
(i.e., the undispatched wind power) rate in China is 15\%, and can be as high as 30\% in less
populated provinces.

\begin{figure}[t!]
\begin{center}
\includegraphics[width=0.8\textwidth]{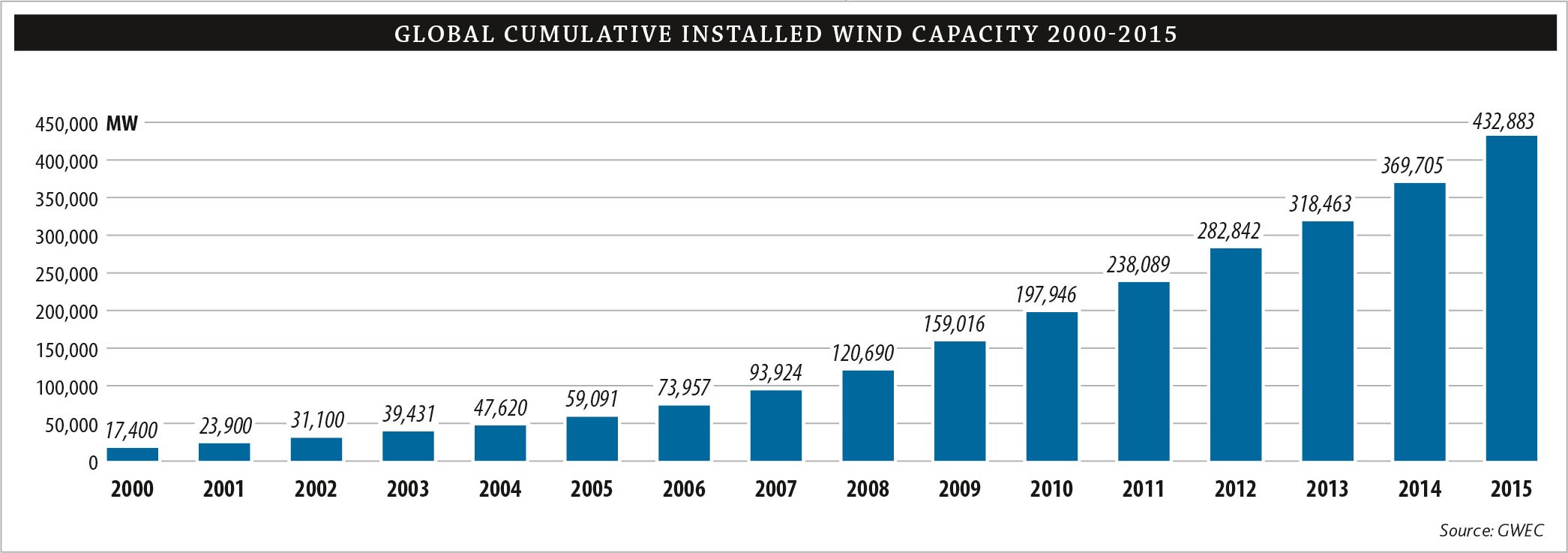}
\end{center}
\caption{Global cumulative installed wind power generation capacity}\label{fig:fig1}
\end{figure}

Obviously, on the one hand, sites and sizes of wind farms should be carefully selected.
An inappropriate location or wrongly determined capacity definitely cannot ensure its wind power dispatchability, which fails to reach the wind power penetration target or recover the investment cost. Hence, an analytical method that considers all critical factors, such as loads, wind resources, existing generation assets and grid configurations, should be employed to support those decisions. This is particularly important for power grids that are operated as electricity markets where market rules
should be followed to clear load and supply \cite{Baringo2011Wind,Baringo2012Transmission,maurovich2015transmission}.
On the other hand, power grid needs to improve its delivery capacity to integrate more wind power. Certainly, building new and stronger transmission and distribution systems, especially long-distance transmission lines, could dispatch more wind power and reduce its curtailment. However, such new projects, which are often across multiple states or provinces, must go through stringent environmental evaluations and approvals, could be
extremely expensive, and require many years to complete. Different from that
idea,  a recent strategy
is to switch off some existing lines to accommodate different
load situations \cite{o2005dispatchable,hedman2011review}. Research has shown that this strategy,
which is often referred to as topology control, can significantly improve the
power delivery capacity and demonstrate a desired cost-effectiveness property \cite{fisher2008optimal,Hedman2008Optimal}.

 Although many recent papers have studied using topology control to achieve higher penetration level of wind and other renewable energy   (e.g., \cite{qiu2015chance,7742999,nikoobakht2016managing,korad2016enhancement,7541026}), its impact on the sitting and sizing of wind farms have not be fully investigated. It basically remains unknown how to analytically determine a wind power generation capacity expansion plan when the electricity market is operated with topology control. Hence, an effective planning tool considering those factors is definitely needed, to support wind power and other renewable energy to reach a high
penetration level and to achieve sustainable development in modern power systems.

\begin{figure}[t!]
    \centering
    \begin{subfigure}{0.45\textwidth}
        \centering
        \includegraphics[height=1.6in]{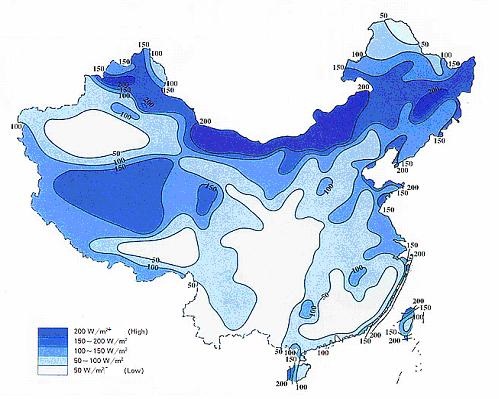}\label{figWR}
        \caption{Wind power resources in China}
    \end{subfigure}%
    \hspace{-0.2cm}
    \begin{subfigure}{0.45\textwidth}
        \centering
        \includegraphics[height=1.6in]{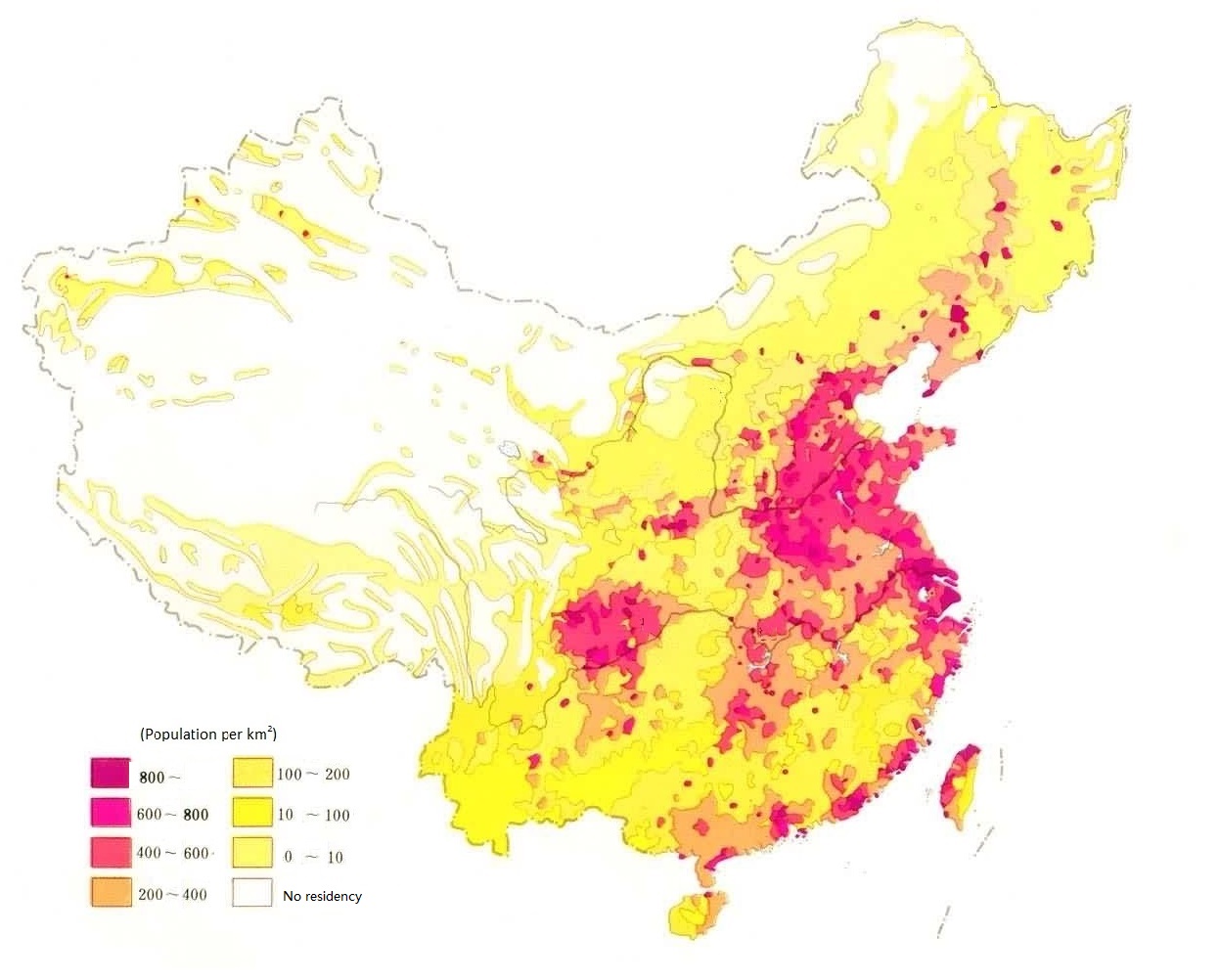}  \label{PD}
        \caption{Population density of China}
    \end{subfigure}
    \caption{Wind resource and population distribution in China} \label{fig:wr_pd}
\end{figure}



\subsection{Literature Review}
Over the last twenty years, to address the planning challenges of integrating wind and other renewable energy into power gird, many advanced decision support models and sophisticated computing methods have been developed and studied (e.g., \cite{burke2010maximizing,burke2011study,hamidi2011value,zhan2016generation,kuznia2013stochastic} and related references in \cite{banos2011optimization,connolly2010review}).
Nevertheless, limited research has been extended to consider  the electricity market impact on wind power generation planning, where the absorption of wind power is determined by the market clearing outcomes.


To capture the interaction between system planner and electricity market,  bilevel optimization (and its variants) is often adopted where the lower level problem, a linear program (LP) in general, is to derive market clearing results. Many system expansion planning applications can be found in the literature, including those for transmission
e.g.,\cite{garces2009bilevel,hesamzadeh2011bi},  for conventional generation
e.g.,\cite{kazempour2011strategic,kazempour2012strategic,wogrin2011generation} and for joint development of transmission and generation e.g.,\cite{jenabi2013bi,jin2011capacity}. Research presented in \cite{Baringo2011Wind} is probably the first one to explicitly study wind generation planning in an electricity market. On top of the lower level market problem, the upper level determines the wind farm location and capacity decisions to maximize the revenue from  paying absorbed wind power at locational marginal prices. Study presented in \cite{Baringo2012Transmission} extends that in \cite{Baringo2011Wind} by considering joint investment on wind generation and transmission lines.
To handle the computational challenge when a large number of scenarios are introduced to capture load and wind power uncertainties for bilevel model of \cite{Baringo2011Wind}, a Benders decomposition method is developed in \cite{baringo2012wind} to make use of decomposable structures and to efficiently solve typical instances. A recent paper \cite{pineda2016impact} investigates
the forecast errors of uncertain generation units (e.g., wind farms) by considering two markets, i.e., the regular day-ahead market based on nominal forecast and the new real-time balancing market to handle imbalances from forecast errors, and builds a scenario-based bilevel optimization model to simulate the day-ahead market in different stochastic scenarios.  We mention that the typical approach to compute those bilevel mixed integer programs (MIPs) is to reformulate into single level problems by replacing the lower level LPs with the Karush--Kuhn--Tucker (KKT) conditions, and then to linearize them into linear MIPs that are readily computable by a professional MIP solver or by Benders decomposition \cite{baringo2012wind}.

As noted earlier, topology control is a rather new strategy for power systems, which was introduced in \cite{o2005dispatchable} in 2005. Although it does not involve any costly hardware installation or upgrade, according to \cite{fisher2008optimal}, a great transmission capacity increase can be achieved, which leads to a savings of 25\% in system dispatch cost.  As shown in \cite{hedman2010co}, it can be further dynamically co-optimized along with unit commitment decisions to generate
significant economic and reliability benefits in dealing with network contingencies. Hence, many recent papers study integrating topology control into system operations  to better absorb wind and other renewable energy, (e.g., \cite{qiu2015chance,7742999,nikoobakht2016managing,korad2016enhancement,7541026}). Nevertheless, as noted in  \cite{Hedman2008Optimal,hedman2011optimal,han2016impacts}, topology control, as a non-traditional operation, raises new challenges to system operators and electricity markets. Now, fast and reliable topology control algorithms for large-scale grid are still in the development stage, coordination of topology control and other operations are under investigation, and, in particular, many economic and policy implications should be addressed in market environments.

\subsection{Research Objectives and Paper Structure}

 In this paper, to analytically support the development of wind power and other renewable energy, we study the capacity expansion planning problem, i.e., to address the sitting and sizing
 issue of wind farms,  under an electricity market environment that employs topology control to achieve better transmission capacity for less market clearing cost.
 Specifically, we first develop a novel mixed integer bilevel optimization model to represent investment, operation and topology control  decisions constrained within a market environment. To the best of our knowledge, we have not been aware of any existing study on topology control in  market-based bilevel (either planning or operation) models. Because the lower level is an MIP, due to the topology control decisions, it cannot be equivalently replaced by any KKT conditions. To solve this challenging bilevel MIP, we then customize a recent decomposition method and enhance it by making use of grid structure. Finally, by solving  instances obtained from typical IEEE test beds, we demonstrate the effectiveness of our solution method on solving bilevel capacity expansion model, and analyze  the impact of topology control on wind power generation planning and wind power integration.

 Overall, we believe that the presented research will provide novel tools to analytically support wind and other renewable energy development, to gain a deeper understanding on topology control, and to promote the integration of renewable energy in modern electricity markets.

  The remaining of this paper is organized as follows. In Section 2, we present our bilevel MIP model for market-based wind power generation capacity expansion planning problem. In Section 3, a customized computational method using the reformulation and decomposition strategy, along with its enhancements, is described. In Section 4, we provide numerical results and our analysis on typical IEEE test beds. Section 5 concludes this paper with a discussion on the future research directions.

\section{Wind Power Generation Planning Model}
\subsection{Modeling Preparation}
In this study, we consider a single planner that makes sitting and sizing decisions of wind generation facilities, i.e., wind farms, over the power grid that runs an electricity market to clear load and supply. By following the convention of bilevel capacity expansion planning research, we build a static model of wind power generation planning for a single representative year. Hence, all wind farm sitting and sizing decisions are made in the same stage, which could provide a desired tradeoff between modeling accuracy and computational trackability \cite{Baringo2011Wind}. Certainly,  it can be extended to a multi-stage model with timing decisions, which should have an extremely higher computational complexity.

To represent the electricity demand over that single year, we adopt the popular load-duration curve, as in the top level picture in Figure \ref{fig:LW_DC}, that plots demand levels (with their durations) from highest to lowest. Because it captures the long-term demand information in a compact way, it has been often used in various power system planning research, e.g., \cite{shu2015spatial,roh2009market}. To formulate a trackable model,
we follow the discretization strategy \cite{roh2009market,Baringo2011Wind} to
cluster the whole curve into $|T|$  demand blocks ($|T|=4$ in Figure \ref{fig:LW_DC}) and
use their mean values (displayed as bold lines) to represent those curve segments. The lower level picture plots the corresponding wind intensity curves in those blocks, which, again, will be represented by their mean values (displayed as in bold lines). Note also that the fuel-based power generation typically has a nonlinear increasing cost function.  That is, the marginal cost is getting larger for a higher level generation. To handle such nonlinearity issue, we also partition the generation capacity into a few production blocks, and assign different unit costs to those blocks to match that increasing marginal cost trend.

\begin{figure}[t!bp]
\begin{center}
\includegraphics[width=0.5\textwidth]{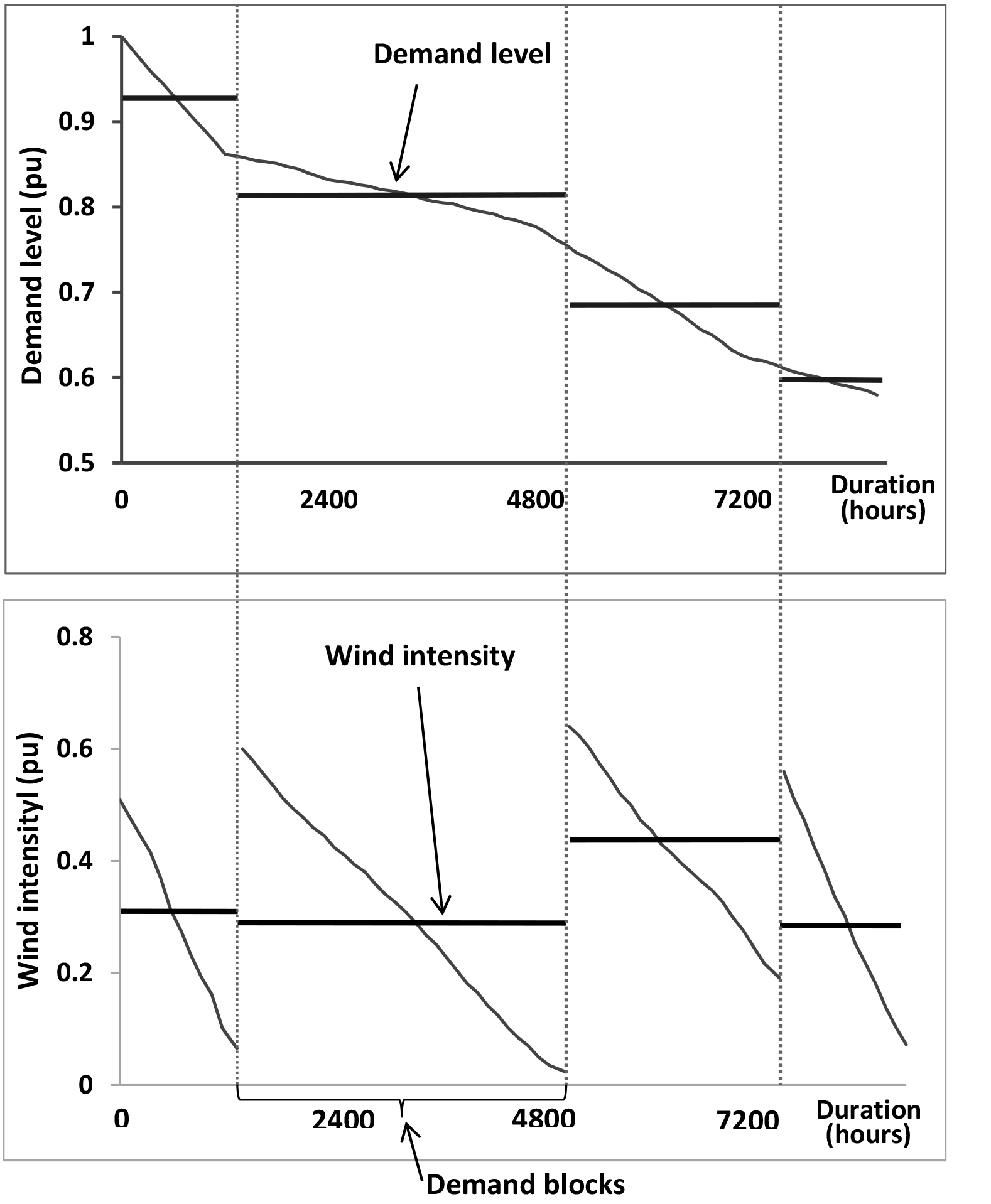}
\end{center}
\caption{Load and wind duration curves}\label{fig:LW_DC}
\end{figure}

Similarly, to capture the
We note that our data preparation is to build a rather deterministic bilevel model, while more samples (i.e., scenarios) from each demand block, as in \cite{Baringo2011Wind}, can be employed to develop a stochastic one. Nevertheless, a stochastic bilevel  MIP model (with lower level MIP problems) imposes a prohibitive computational challenge and should demand for more sophisticated algorithms. Hence, we currently focus on the investigation of topology control and wind power generation planning in a deterministic environment, and leave the study in a stochastic environment as a future research direction.

\subsection{Bilevel MIP Formulation and Description}


In our bilevel optimization model, the upper level, i.e., the system planner, seeks
optimal sites and sizes of new wind farms to maximize the (weighted) wind power absorption and minimize the (annualized) investment cost in that representative year, subject to budget and wind resource availability restrictions. Note that in a power grid, a site is referred to a bus in the network. Wind power absorption at each bus is actually determined by the market clearing problem, which is the lower level problem that minimizes the dispatch cost with consideration of grid configurations. As mentioned in Section \ref{intro}, we particularly model topology control operations within the lower level clearing problem. We next provide the table of nomenclature that includes parameters and variables, and then present our bilevel wind power generation planning model. When it is clear within the context,  a letter in bold face is to represent a vector of variables denoted by that letter.

\begin{table}[!ht]
\begin{flushleft}
\scalebox{0.8}{
\begin{tabular}{ll}
\hline
\Large{\textbf{Nomenclature}}\\\\
\emph{Indices and Sets} &\\
$\mathbf{T}$ & the set of demand blocks, indexed by $t$\\
$\mathbf{I}$  &  the set of buses, indexed by $i$\\
$\mathbf{L}$ & the set of lines, indexed by $l$\\
$\mathbf{\Omega}$  & the set of fuel-based generators, indexed by $j$\\
$\Omega_i \subseteq \Omega$ & the set of fuel-based generators at bus $i$,\\
$\Psi$  & the set of buses eligible for building wind farms, $\Psi \subseteq \mathbf{I}$\\
$\mathbf{B}_{j}$  & the set of production blocks of generator  $j$, $j \in \mathbf{\Omega}$, indexed by $b$\\
$o(l)$  & the origin bus of transmission line $l$\\
$d(l)$  & the destination bus of transmission line $l$\\
$r\in \Omega$  &  the reference bus\\ \\
\emph{Parameters}&\\
$\kappa$    &  weight coefficient for wind power absorption \\
$L_t$ &  Duration (in hours) in demand block $t$\\
$S_{l}$  & susceptance of line $l$\\
$h_i$ and ${c}_{i}$ & annualized fixed and  investment costs of unit wind power capacity at $i$\\
$H_i$ and $C_i$ & fixed and investment costs of unit wind power generation capacity at $i$\\
$\rho_i$ & load shedding penalty cost that is strictly positive \\
$\hat C$  &  overall budget of wind power investment\\
$\overline U_{i}$  & capacity upper bound of wind farm installation at $i$\\
$P_{jb}$ &  Generation capacity of $b$-th block by fuel-based generator $j$\\
$D_{i,t}$  & power demand at bus $i$ in demand block $t$\\
$k_{i,t}$  & wind intensity at bus $i$ in demand block $t$\\
$\overline f_l$  & transmission capacity of line $l$\\
$\overline \theta$ & maximum value of phase angle  \\
$p_{jb}$  & price offered by generator $j$ in its $b$-th block, $j \in \Omega$\\
\\
\emph{Variables}&\\
$x_{i}$  & binary variables, 1 if a wind farm is installed at bus $i$\\
$u_{i}$  & wind power capacity\\
$g^{w}_{i,t}$  & wind power production at bus $i$ in demand block $t$\\
$g^m_{jb,t}$  & power generation in  $b$-th block by fuel-based generator $j$ in  demand block $t$\\
$s_{i,t}$ & load shedding at bus $i$ in demand block $t$\\
$z_{l,t}$  & binary variables with 0 representing that line $l$ is switched off in demand block $t$ \\
$f_{l,t}$  & power flow on transmission line $l$ in demand block $t$ \\
$\theta_{i,t}$  & phase angle at bus $i$ in demand block $t$\\
\hline
\end{tabular}
}
\end{flushleft}
\end{table}
\newpage

\begin{subequations}
\label{eq_BiMIP}
\begin{align}
\max \ & \kappa\sum_{t\in \mathbf{T}} L_t \sum_{i \in \Psi} g^{w}_{i,t}
 - \sum_{i \in \Psi} (c_iu_i+h_ix_i)  \label{eq_obj_UP}\\ %
 {s.t.} \ &  \sum_{i \in \Psi}  (C_iu_i +H_ix_i) \leq  \hat C \label{eq_budget_UP} \\ %
 & u_i  \leq \overline U_{i} x_i, \ \forall i \in \Psi    \label{eq_installation_UP}\\ %
  & x_{i} \in \{0,1\}, u_{i} \in  \mathbb{{R}_{+}}, i \in  \Psi \label{eq_var_UP}
 \end{align}
 \end{subequations}
where $ (\mathbf{g}^m_t, \mathbf{g}^w_t, \mathbf{\theta}_t, \mathbf{f}_t,\mathbf{s}_t, \mathbf{z}_t) \in$
\begin{subequations}
\label{eq_LowMIP}
\begin{align}
\arg\min \Big\{ \ & \sum_{j \in  \Omega} \sum_{b \in  \mathbf{B}_j} p_{jb}g^m_{jb,t}+ \sum_{i\in \mathbf{I}} \rho_is_{i,t} \label{eq_obj_LW}\\
 {s.t.} \ &  0\leq g^w_{i,t} \leq k_{i,t} u_i , \ \forall i \in \Psi \label{eq_wind_LW}\\
  & 0\leq g^m_{jb,t} \leq P_{jb}, \ \forall j \in \Omega, b \in \mathbf{B}_{j} \label{eq_genUB_LW}\\
 & \sum_{j\in \Omega_i}\sum_{b \in \mathbf{B}_{j}} g^m_{jb,t} + g^{w}_{i,t} + \sum_{l|d(l)=i} f_{l,t}
  + s_{i,t} = D_{i,t} +  \sum_{l|o(l)=i} f_{l,t}, \ \forall i\in \Psi \label{eq_flow1_LW}\\
 & \sum_{j\in \Omega_i}\sum_{b \in \mathbf{B}_{j}} g^m_{jb,t} +
 \sum_{l|d(l)=i} f_{l,t} + s_{i,t} = D_{i,t} +  \sum_{l|o(l)=i} f_{l,t}, \ \forall i\in \mathbf{I}/\Psi \label{eq_flow2_LW}\\
 & f_{l,t} = S_{l} \big(\theta_{(o(l),t)}-\theta_{(d(l),t)}\big ) z_{l,t}, \ \forall l\in \mathrm{L} \label{eq_flowline_LW}\\
 & -\overline f_l \leq f_{l,t} \leq \overline f_{l}, \ \forall l \in \mathbf{L} \label{eq_flowB_LW}\\
    & -\overline \theta \leq \theta_{i,t} \leq \overline \theta, \ \forall i \in \mathbf{I}/\{r\} \label{eq_angleB_LW}\\
 &     z_{l,t} \in \{0,1\}, \forall l\in \mathrm{L}, s_{i,t}\geq 0, \forall i \in \mathbf{I} \label{eq_var_LW} \Big\}
\end{align}
\end{subequations}
 for all $t\in \mathbf{T}$.

For the upper level problem, the objective function in \eqref{eq_obj_UP} is to maximize the (weighted) annual wind power absorption and the annualized fixed and investment costs. Constraint in \eqref{eq_budget_UP}
defines the total budget to restrict installed wind farms and their capacities. Constraints \eqref{eq_installation_UP}  provide upper bounds on wind farm capacities.

The lower level problem in \eqref{eq_LowMIP} provides market clearing results using DC optimal power flow formulation. We highlight that a set of \eqref{eq_LowMIP} will be defined for all demand block in $T$. Unless explicitly mention, we treat the aggregation of those problems together as the lower level problem in our bilevel optimization model. Specifically, the objective function \eqref{eq_obj_LW} is to maximize social welfare, which is translated to minimize the total generation cost from fuel-based power generators over their production blocks. Note that wind power is assumed to
 have zero cost. Constraints in \eqref{eq_wind_LW} impose upper bound restrictions on the wind power injection due to available wind intensity, for buses with wind farms.  Constraints in \eqref{eq_genUB_LW} ensure upper bound restrictions on power generation of each fuel-based generator and its every production block.  Constraints in \eqref{eq_flow1_LW} and \eqref{eq_flow2_LW} present power balance requirements for buses with and without wind farms. Constraints in \eqref{eq_flowline_LW} define, if that line is not disconnected, i.e., $z_{l,t}=1$, the power flow is proportion by $S_l$ to the phase angle difference between two ending buses of each line. Otherwise, the power flow is $0$.
 Constraints in \eqref{eq_flowB_LW} provide lower and upper bounds for power flow in each line.
 Constraints in \eqref{eq_angleB_LW} bound phase angle for each bus, except the reference bus, whose angle is fixed at 0.

  Obviously, the upper level problem in \eqref{eq_BiMIP} is a typical budget allocation formulation. When line switching operations are not introduced, the lower level problem in \eqref{eq_LowMIP} reduces to a pure linear program, which is the case in the majority of exiting bilevel optimization based power system capacity expansion models, e.g., \cite{Baringo2011Wind,baringo2012wind}. Nevertheless, different from them, we highlight that \eqref{eq_LowMIP} is a mixed integer nonlinear program, given that $z_{l,t}$'s are binary and \eqref{eq_flowline_LW} are nonlinear. With the binary value of $z_{l,t}$ and upper bound of $\theta_{i,t}$, the nonlinear terms in \eqref{eq_flowline_LW} can be linearized by using additional variables and constraints. For example, consider $\theta_{(o(l),t)}z_{l,t}$ in \eqref{eq_flowline_LW}. It can be equivalently replaced by a new variable $\vartheta_{(o(l),t)}$ and the associated constraints:
  \begin{subequations}
  \begin{align}
  -\overline \theta z_{l,t}\leq &\vartheta_{(o(l),t)}  \leq \overline \theta z_{l,t}, \\
  \theta_{(o(l),t)}-\overline\theta (1-z_{l,t})\leq & \vartheta_{(o(l),t)} \leq
  \theta_{(o(l),t)}+\overline\theta (1-z_{l,t})
  \end{align}
  \end{subequations}
  By using such linearization technique to all nonlinear terms in \eqref{eq_flowline_LW}, the lower level problem \eqref{eq_LowMIP} can be readily converted into an MIP. Since those linearization operations are rather simple, we
  do not present the complete linearized model and still use (\ref{eq_BiMIP}-\ref{eq_LowMIP}) with \eqref{eq_flowline_LW} as our bilevel planning model to gain intuitive  understanding. \\

\noindent\textbf{Remark:} Although the complicated nonlinear factor can be addressed  by linearization, the essential challenge to solve the bilevel optimization model (\ref{eq_BiMIP}-\ref{eq_LowMIP}) comes from the mixed integer formulation of \eqref{eq_LowMIP}. Because of the non-convexity nature from discrete variables, the popular KKT conditions based solution approach is not applicable anymore. Indeed, algorithm development for bilevel mixed integer program with a mixed integer lower level problem is rather limited and few real applications have been accurately solved. Actually, the existence of integer variables could
cause the whole bilevel MIP model not have any optimal solution \cite{Zeng2014Solving}.
For instances with an optimal solution, a recent algorithm strategy, i.e., reformulation and decomposition method \cite{Zeng2014Solving}, has demonstrated a strong solution capability in computing bilevel mixed integer capacity expansion problems \cite{campoacapacity,CMU_bilevel}. Hence, in the next section, we adopt this method and present a customization according to specifications of our bilevel wind power planning MIP model. Moreover, we design a few enhancement techniques that could further significantly improve our computational power over practical instances. Before we proceed to the next section, we state a result on the existence of an optimal solution to (\ref{eq_BiMIP}-\ref{eq_LowMIP}). Let $\mathbf{x}$ and $\mathbf{u}$ denote the vectors of $x_i$ and $u_i$, $i\in\Psi$, respectively.

\begin{prop}
$(i)$ The lower level problem in \eqref{eq_LowMIP} has a finite optimal solution
for any combination of $(\mathbf{x},\mathbf{u})$ and $\mathbf{z}_t$, i.e., it has the \emph{relatively complete response} property.\\
$(ii)$ The complete bilevel MIP model in (\ref{eq_BiMIP}-\ref{eq_LowMIP}) has an optimal solution.
\end{prop}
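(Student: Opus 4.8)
The plan is to treat the two parts in sequence, since part $(i)$ supplies exactly the regularity that part $(ii)$ needs. For part $(i)$, I would first note that the lower level separates across the demand blocks $t\in\mathbf{T}$, so it suffices to analyze one block. Fixing the upper decision $(\mathbf{x},\mathbf{u})$ and the switching vector $\mathbf{z}_t$ turns \eqref{eq_LowMIP} into a linear program in the continuous variables $(\mathbf{g}^m_t,\mathbf{g}^w_t,\mathbf{\theta}_t,\mathbf{f}_t,\mathbf{s}_t)$, because with $\mathbf{z}_t$ fixed the coupling \eqref{eq_flowline_LW} is linear. I would then exhibit an explicit feasible point --- set all generation, flows, and phase angles to zero and absorb demand entirely through shedding, $s_{i,t}=D_{i,t}$ --- and verify it satisfies \eqref{eq_wind_LW}--\eqref{eq_var_LW} irrespective of $\mathbf{z}_t$ (the flow equation reads $0=S_l\cdot 0\cdot z_{l,t}$ in either case). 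Since $0\le g^m_{jb,t}\le P_{jb}$ confines the generation-cost term to a box and the shedding term is nonnegative, the objective \eqref{eq_obj_LW} is bounded below; a feasible LP with objective bounded below attains a finite optimum, giving the \emph{relatively complete response} property.

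For part $(ii)$, under the usual optimistic reading, the plan is to recast the problem as a single maximization over a compact set and invoke Weierstrass. I would introduce, for each block, the lower-level value function $V_t(\mathbf{u})=\min_{\mathbf{z}_t}v_t(\mathbf{u},\mathbf{z}_t)$, where $v_t(\mathbf{u},\mathbf{z}_t)$ is the optimal value of the fixed-$\mathbf{z}_t$ LP from part $(i)$, and rewrite the model as maximizing \eqref{eq_obj_UP} over all $(\mathbf{x},\mathbf{u})$ satisfying \eqref{eq_budget_UP}--\eqref{eq_var_UP} together with lower-level solutions $(\mathbf{g}^m_t,\mathbf{g}^w_t,\mathbf{\theta}_t,\mathbf{f}_t,\mathbf{s}_t,\mathbf{z}_t)$ that are lower-feasible and satisfy the optimality cut that the clearing cost \eqref{eq_obj_LW} equals $V_t(\mathbf{u})$. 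I would then show this feasible region is compact. Boundedness is immediate: $\mathbf{x}\in\{0,1\}^{|\Psi|}$, $0\le u_i\le\overline U_i$, the bounds \eqref{eq_genUB_LW}, \eqref{eq_flowB_LW}, \eqref{eq_angleB_LW} bound $\mathbf{g}^m_t,\mathbf{f}_t,\mathbf{\theta}_t$, while $0\le g^w_{i,t}\le k_{i,t}\overline U_i$ and the balance equations then bound $\mathbf{s}_t$. For closedness I would decompose the region over the finitely many integer configurations $(\mathbf{x},\{\mathbf{z}_t\})$; for each fixed configuration every remaining constraint is linear in the continuous variables and the optimality cut is closed once $V_t$ is continuous. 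A continuous objective on a nonempty compact set --- nonempty because $\mathbf{x}=\mathbf{0},\mathbf{u}=\mathbf{0}$ is admissible --- attains its maximum.

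The crux, and the step I expect to be the main obstacle, is the continuity of $V_t(\mathbf{u})$; this is precisely what distinguishes the model from a generic bilevel MIP, where an optimum may fail to exist. Here I would use that $\mathbf{u}$ enters the lower level only through the right-hand side $k_{i,t}u_i$ in \eqref{eq_wind_LW}, so for each fixed $\mathbf{z}_t$ the map $\mathbf{u}\mapsto v_t(\mathbf{u},\mathbf{z}_t)$ is an LP value function in the right-hand side, which by part $(i)$ is finite for every $\mathbf{u}$ and hence continuous (indeed piecewise linear). Because $\mathbf{z}_t$ ranges over the finite set $\{0,1\}^{|\mathbf{L}|}$, the function $V_t=\min_{\mathbf{z}_t}v_t(\cdot,\mathbf{z}_t)$ is a minimum of finitely many continuous functions and is therefore continuous; this is exactly where relatively complete response is used, since it rules out the infeasible/$-\infty$ values that would otherwise break continuity and leave the optimality cut non-closed. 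The only remaining points demanding care are the mild sign assumptions on the data ($D_{i,t}\ge 0$ and $\overline f_l,\overline\theta\ge 0$) that make the explicit all-zero, full-shedding point feasible, and these hold automatically in the power-system setting.
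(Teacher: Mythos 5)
Your proof is correct, but part $(ii)$ follows a genuinely different route from the paper. For part $(i)$ you and the paper argue identically: the load-shedding variables yield a feasible point (shed all demand, set everything else to zero) for any $(\mathbf{x},\mathbf{u})$ and $\mathbf{z}_t$, and the strictly positive penalties together with the generation bounds keep the LP objective bounded below, so a finite optimum exists --- you simply spell out what the paper compresses into one sentence. For part $(ii)$, however, the paper proves nothing itself: it invokes Corollary 3 of \cite{Zeng2014Solving}, a general existence theorem for optimistic bilevel MIPs whose lower level has the relatively complete response property, with part $(i)$ supplying the hypothesis. You instead reprove existence from scratch: each $v_t(\cdot,\mathbf{z}_t)$ is an LP value function of the right-hand side, polyhedral and finite by part $(i)$, hence continuous; $V_t=\min_{\mathbf{z}_t}v_t(\cdot,\mathbf{z}_t)$ is a finite minimum of continuous functions, hence continuous; the optimistic single-level reformulation then has a feasible set that splits over the finitely many integer configurations $(\mathbf{x},\{\mathbf{z}_t\})$ into closed bounded pieces, and Weierstrass finishes the argument. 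What your route buys is self-containedness and a precise localization of where relatively complete response is actually needed: without it $V_t$ need not be upper semicontinuous, the optimality cut need not be closed, and existence can fail --- exactly the pathology the paper alludes to when citing \cite{Zeng2014Solving}. (One small imprecision: an infeasible lower-level LP gives $V_t=+\infty$, not $-\infty$; unboundedness gives $-\infty$. Both are excluded by part $(i)$, so nothing breaks.) What the paper's citation buys is brevity, plus the fact that the same machinery of \cite{Zeng2014Solving} also underwrites the finite convergence of the decomposition algorithm in Section 3, so one reference serves both purposes. Your argument does lean on problem-specific structure --- all decision variables are bounded, there are finitely many switching configurations, and $\mathbf{u}$ enters the lower level only through the right-hand side $k_{i,t}u_i$ --- but each of these holds in this model, so there is no gap.
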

The first result follows from the fact that the load shedding penalties guarantee the existence of an optimal solution to \eqref{eq_LowMIP} for any given $(\mathbf{x},\mathbf{u})$ and $\mathbf{z}_t$. Then, according to Corollary 3 in \cite{Zeng2014Solving}, the second result is valid.

\section{Solution Method Based on Reformulation and Decomposition }
As mentioned earlier, the  straightforward KKT conditions based solution method cannot be applied due to the existence of binary variables for line switching operations in the lower level problem. As shown in \cite{Zeng2014Solving}, the reformulation and decomposition method can effectively address that challenge, which is customized and enhanced in this section. Because the lower level problem \eqref{eq_LowMIP} is complicated, we next provide a compact matrix-based representation to make our exposition in this section more accessible.

\begin{subequations}
\label{eq_simplify}
 \begin{align}
(\mathbf{g}^m_t, \mathbf{g}^w_t, \mathbf{\theta}_t, \mathbf{f}_t,\mathbf{s}_t, \mathbf{z}_t) \in \arg & \Big\{\min \mathbf{p}^m\mathbf{g}^m_t+\mathbf{\rho}\mathbf{s}_t: \\
\mbox{s.t.} 
\ & A^m\mathbf{g}^m_t+A^w\mathbf{g}^w_t+A^f\mathbf{f}_t+ A^s\mathbf{s}_t=\mathbf{D}_t \\
& f_{l,t}+\theta^T_tJ\mathbf{z}_t = 0, \ \forall l\in \mathbf{L}\label{eq_comp_flowline}\\
& B^m\mathbf{g}^m_t+B^w\mathbf{g}^w_t+B^f\mathbf{f}_t+B^{a}\mathbf{\theta}_t \geq \mathbf{b} + \mathbf{Ku} \label{eq_comp_bound}\\
& \mathbf{g}^m_t\geq \mathbf{0}, \mathbf{g}^w_t\geq \mathbf{0}, \mathbf{s}_t\geq 0, \mathbf{z}_t\in \{0,1\}^{|\mathbf{L}|}
\Big\}
\end{align}
\end{subequations}
where, with properly defined coefficient matrices, the first constraint represents those in (\ref{eq_flow1_LW}-\ref{eq_flow2_LW}), the second constraint represents those in \eqref{eq_flowline_LW} and the third constraint include all other constraints, i.e., bound constraints. We mention that superscript $T$ in \eqref{eq_comp_flowline}
denotes the transpose operation and vector $\mathbf{u}$ represents the capacity variables from the upper level problem.

In the following, we describe the basic idea and concrete steps of our computational method using the compact formulation of \eqref{eq_LowMIP}.

\subsection{An Equivalent Reformulation for Decomposition}
To provide a decomposable structure for algorithm development, we follow \cite{Zeng2014Solving} to reformulate our original bilevel MIP model in (\ref{eq_BiMIP}-\ref{eq_LowMIP}) as follows. Note that $\mathbf{1} \in \mathbb{R}^{|\Psi|}$ is a vector of $1$s  $\mathbf{c}$, $\mathbf{h}$
and $\mathbf{x}$ are vectors of $c_i$, $h_i$ and $x_i$ respectively, and other notations are introduced in the compact form in \eqref{eq_simplify}.

\begin{subequations}\label{eq_equivalentBMIP}
\begin{align}
\max \ & \kappa\sum_{t\in \mathbf{T}} L_t (\mathbf{1}^T\mathbf{g}^{w}_t) -
  (\mathbf{cu}+ \mathbf{hx})  \label{eq_obj_MP}\\ %
  {s.t.} \ &  (\ref{eq_budget_UP} - \ref{eq_var_UP}) \label{M2} \\ %
 \ & A^m\mathbf{\tilde g}^m_t+A^w\mathbf{\tilde g}^w_t+A^f\mathbf{\tilde f}_t+ A^s\mathbf{\tilde s}_t=\mathbf{D}_t, \forall t\in \mathbf{T}\\
& \tilde f_{l,t}+\mathbf{\tilde \theta}^T_tJ\mathbf{\tilde z}_t = 0, \ \forall l\in \mathbf{L}, t\in \mathbf{T} \\
& B^m\mathbf{\tilde g}^m_t+B^w\mathbf{\tilde g}^w_t+B^f\mathbf{\tilde f}_t+B^{a}\mathbf{\tilde \theta}_t \geq \mathbf{b} + \mathbf{Ku}, \forall t\in \mathbf{T}  \\
& \mathbf{\tilde g}^m_t\geq \mathbf{0}, \mathbf{\tilde g}^w_t\geq \mathbf{0}, \mathbf{\tilde s}_t\geq 0, \mathbf{\tilde z}_t\in \{0,1\}^{|\mathbf{L}|}, \forall t\in \mathbf{T} \label{eq_tilde_end}\\
& \mathbf{p}^m\mathbf{\tilde g}^m_t+\mathbf{\rho}\mathbf{\tilde s}_t \leq \min\Big\{\mathbf{p}^m\mathbf{g}^m_t+\mathbf{\rho}\mathbf{s}_t: \label{eq_refor_link}\\
& \ \  \ \ \ \ \ \  \ \ \ \ \ \mbox{s.t.} 
\ A^m\mathbf{g}^m_t+A^w\mathbf{g}^w_t+A^f\mathbf{f}_t+ A^s\mathbf{s}_t=\mathbf{D}_t \\
&  \ \  \ \ \ \ \ \  \ \ \ \ \ \ \ f_{l,t}+\theta^T_tJ\mathbf{z}_t = 0, \ \forall l\in \mathbf{L} \\
&  \ \  \ \ \ \ \ \  \ \ \ \ \  \ \ B^m\mathbf{g}^m_t+B^w\mathbf{g}^w_t+B^f\mathbf{f}_t+B^{a}\mathbf{\theta}_t \geq \mathbf{b} + \mathbf{Ku}  \\
&  \ \  \ \ \ \ \ \  \ \ \ \ \ \ \ \mathbf{g}^m_t\geq \mathbf{0}, \mathbf{g}^w_t\geq \mathbf{0}, \mathbf{s}_t\geq 0, \mathbf{z}_t\in \{0,1\}^{|\mathbf{L}|}\Big\}
& \forall t\in \mathbf{T} \label{eq_refor_link_end}
\end{align}
\end{subequations}

Because of \eqref{eq_refor_link}, we can conclude that \eqref{eq_equivalentBMIP} is  equivalent to the original bilevel MIP model in (\ref{eq_BiMIP}-\ref{eq_LowMIP}). Although  more complicated than
(\ref{eq_BiMIP}-\ref{eq_LowMIP}), it, however, provides a convenient representation to derive non-trivial bounds to (\ref{eq_BiMIP}-\ref{eq_LowMIP}). Specifically, let $\mathbf{Z}_t$ be the collection of all possible realizations of $\mathbf{z}_t$. Clearly, $|\mathbf{Z}_t|= 2^{|\mathbf{L}|}$.
Next, (\ref{eq_refor_link}-\ref{eq_refor_link_end}) can be rewritten by enumerating $\mathbf{z}_t$ as
\begin{subequations}\label{eq_rewritten}
\begin{align}
& \mathbf{p}^m\mathbf{\tilde g}^m_t+\mathbf{\rho}\mathbf{\tilde s}_t \leq \min \Big\{\mathbf{p}^m\mathbf{g}^{m,q}_t+\mathbf{\rho}\mathbf{s}^q_t: \label{eq_rewritten_link}\\
& \ \  \ \ \ \ \ \  \ \ \ \ \ \mbox{s.t.} 
\ A^m\mathbf{g}^{m,q}_t+A^w\mathbf{g}^{w,q}_t+A^f\mathbf{f}^q_t+ A^s\mathbf{s}^q_t=\mathbf{D}_t \label{eq_rewritten_link1}\\
&  \ \  \ \ \ \ \ \  \ \ \ \ \ \ \ f^q_{l,t}+{(\theta^q)}^T_tJ\mathbf{z}^{q*}_t = 0, \ \forall l\in \mathbf{L} \label{eq_rewritten_link2}\\
&  \ \  \ \ \ \ \ \  \ \ \ \ \  \ \ B^m\mathbf{g}^{m,q}_t+B^w\mathbf{g}^{w,q}_t+B^f\mathbf{f}^q_t+B^{a}\mathbf{\theta}^q_t \geq \mathbf{b} + \mathbf{Ku}  \label{eq_rewritten_link3}\\
&  \ \  \ \ \ \ \ \  \ \ \ \ \ \ \ \mathbf{g}^{m,q}_t\geq \mathbf{0}, \mathbf{g}^{w,q}_t\geq \mathbf{0}, \mathbf{s}^q_t\geq 0\Big\}, \ \forall \mathbf{z}^{q*}_t\in  \mathbf{Z}_t,  \label{eq_rewritten_link_end}
\end{align}
\end{subequations}
where $\mathbf{z}^{q*}_t\in  \mathbf{Z}_t$ is a particular realization of $\mathbf{z}_t$.

Although \eqref{eq_rewritten} is definitely cumbersome due to enumeration, it has two critical advantages. First, once $\mathbf{z}^{q*}_t$ is provided, the right-hand-side of (\ref{eq_rewritten_link}-\ref{eq_rewritten_link_end}) is an LP. Because it has a finite optimal value that can be characterized by its KKT conditions, we have
\begin{subequations}\label{eq_rewrittenKKT}
\begin{align}
& \ \  \ \ \ \ \ \  \ \ \ \ \ \mathbf{p}^m\mathbf{\tilde g}^{m}_t+\mathbf{\rho}\mathbf{\tilde s}_t \leq \mathbf{p}^m\mathbf{g}^{m,q}_t+\mathbf{\rho}\mathbf{s}^q_t \label{eq_rewritten_linkKKT}\\
& \ \  \ \ \ \ \ \  \ \ \ \ \ \ \    (\ref{eq_rewritten_link1}-\ref{eq_rewritten_link3}) \nonumber \\
& \ \  \ \ \ \ \ \  \ \ \ \ \ \ \ A^m \mathbf{\pi}^q_t +B^m \mathbf{\eta}^q_t \leq \mathbf{p}^m   \label{eq_rewritten_linkKKT1} \\
& \ \  \ \ \ \ \ \  \ \ \ \ \ \ \ A^w \mathbf{\pi}^q_t + B^w \mathbf{\eta}^q_t \leq \mathbf{0}  \label{eq_rewritten_linkKKT2} \\
& \ \  \ \ \ \ \ \  \ \ \ \ \ \ \ A^f \mathbf{\pi}^q_t + \sum_{l \in \mathbf{L}} {\lambda}_{l,t}^n + B^f\mathbf{\eta}^q_t = \mathbf{0} \label{eq_rewritten_linkKKT3}\\
& \ \  \ \ \ \ \ \  \ \ \ \ \ \ \ A^s\mathbf{\pi}^q_t \leq \mathbf{\rho} \label{eq_rewritten_linkKKT4}\\
& \ \  \ \ \ \ \ \  \ \ \ \ \ \ \sum_{l \in \mathbf{L}}J\mathbf{z}^{q*}_t{\lambda}_{l,t}^q + B^{a}\eta^q_t = \mathbf{0} \label{eq_rewritten_linkKKT5}\\
& \ \  \ \ \ \ \ \  \ \ \ \ \ \ \ {\eta}_t^q \perp \Big(B^m\mathbf{g}^{m,q}_t+B^w\mathbf{g}^{w,q}_t+B^f\mathbf{f}^q_t+B^{a}\mathbf{\theta}^q_t - \mathbf{b} - \mathbf{Ku}\Big)  \label{MI_Slack1}\\
 & \ \  \ \ \ \ \ \  \ \ \ \ \ \ \    \mathbf{g}^{m,q} \perp \Big(A^m \mathbf{\pi}^q_t +B^m \mathbf{\eta}^q_t - \mathbf{p}^m\Big)  \label{MI_Slack2}  \\
& \ \  \ \ \ \ \ \  \ \ \ \ \ \ \ \mathbf{g}^{w,q}_t \perp \Big(A^w \mathbf{\pi}^q_t + B^w \mathbf{\eta}^q_t\Big)   \label{MI_Slack3} \\
   & \ \  \ \ \ \ \ \  \ \ \ \ \ \ \ s_t^q \perp \Big(A^s\mathbf{\pi}^q_t - \rho\Big) \label{MI_Slack4} \\
& \ \  \ \ \ \ \ \  \ \ \ \ \ \ \ \mathbf{g}^{m,q}_t\geq \mathbf{0}, \mathbf{g}^{w,q}_t\geq \mathbf{0}, \mathbf{s}^q_t\geq 0, \mathbf{\pi}_t^q,  \mathbf{\lambda}_{l,t}^q~free, \mathbf{\eta}_t^q \geq 0 \label{MI_Var}
\}
\end{align}
\end{subequations}
where $\mathbf{\pi}_t^q, \mathbf{\lambda}_{l,t}^q$, and $\mathbf{\eta}_t^q$ are dual variables of constraints in \eqref{eq_rewritten_link1}, \eqref{eq_rewritten_link2} and \eqref{eq_rewritten_link3} respectively.

Second, instead of having a complete enumeration, (\ref{eq_rewritten_link}-\ref{eq_rewritten_link_end}) developed based on a subset $\mathbf{\hat Z}_t \subseteq \mathbf{Z}_t$ leads to a relaxation of \eqref{eq_equivalentBMIP}, or equivalently, a relaxation of  (\ref{eq_BiMIP}-\ref{eq_LowMIP}). Those two critical advantages enable us to develop an  decomposition algorithm using the column-and-constraint generation method~\cite{Zeng2013Solving}. \vspace{5pt}

\noindent\textbf{Remark:} \\
Note that the nonlinear complementarity constraints in (\ref{MI_Slack1})-(\ref{MI_Slack4}) can be linearized. Consider $\hat i^{\textrm{th}}$ constraint in (\ref{MI_Slack2}) as an example. It is equivalent to constraints  in (\ref{Linearization}), where $M$ is a sufficiently large number.

\begin{subequations}\label{Linearization}
\begin{align}
     & \ \ \mathbf{g}^{m,q}_{\hat i} \leq M \delta_{\hat i} \\
     & \ \ \Big(A^m \mathbf{\pi}^q_t +B^m \mathbf{\eta}^q_t - \mathbf{p}^m\Big)_{\hat i} \geq M(\delta_{\hat i}-1)\\
     & \ \ \delta_{\hat i} \in \{0,1\}
\end{align}
 \end{subequations}

\subsection{Decomposition Algorithm}
As a decomposition algorithm, two subproblems and one master problem are involved in an iterative procedure. We first present two subproblems and then introduce the master problem within the algorithm description.

For a given upper level decision $\mathbf{u^*}$, we formulate and compute the following subproblem $\mathbf{SP1}$. Note that it is  defined for every $t\in \mathbf{T}$.  As mentioned earlier, it can be linearized and readily computed as an MIP problem.
\begin{subequations}\label{SP1}
\begin{align}
\mathbf{SP1}: \ \ \Phi_t(\mathbf{u^*})= \min \ & \mathbf{p}^m\mathbf{g}^m_t+\mathbf{\rho}\mathbf{s}_t  \label{SP1_Obj}\\ %
\mbox{s.t.}~ &  A^m\mathbf{g}^m_t+A^w\mathbf{g}^w_t+A^f\mathbf{f}_t+ A^s\mathbf{s}_t=\mathbf{D}_t    \label{SP1_1}      \\ %
 & f_{l,t}+\theta^T_tJ\mathbf{z}_t = 0, \ \forall l\in \mathbf{L} \label{SP1_2}  \\
 & B^m\mathbf{g}^m_t+B^w\mathbf{g}^w_t+B^f\mathbf{f}_t+B^{a}\mathbf{\theta}_t \geq \mathbf{b} + \mathbf{Ku}^*    \label{SP1_3}   \\
  & \mathbf{g}^m_t\geq \mathbf{0}, \mathbf{g}^w_t\geq \mathbf{0}, \mathbf{s}_t\geq 0, \mathbf{z}_t\in \{0,1\}^{|\mathbf{L}|}   \label{SP1_4}
 \end{align}
 \end{subequations}

 Clearly, \textbf{SP1} provides an optimal solution of lower level model (\ref{eq_LowMIP}) under a wind power investment plan $\mathbf{u^*}$. However, it might have multiple optimal solutions. To derive one that is in favor of \eqref{eq_BiMIP}, we define and compute subproblem \textbf{SP2}, which aggregates all demand blocks. Again, \textbf{SP2} can be readily computed as an MIP problem.

\begin{subequations}\label{SP2}
\begin{align}
(\mathbf{SP2})\ \  \overline \Phi (\mathbf{u^*}) = \max \ & \ \kappa\sum_{t\in T} L_t (\mathbf{1}^T\mathbf{g}^{w}_t) -   (\mathbf{cu}+ \mathbf{hx})  \label{SP2_Obj}\\ %
\mbox{s.t.}~ &  A^m\mathbf{g}^m_t+A^w\mathbf{g}^w_t+A^f\mathbf{f}_t+ A^s\mathbf{s}_t=\mathbf{D}_t    \ \forall t\in \mathbf{T} \label{SP2_1}      \\ %
 & f_{l,t}+\theta^T_tJ\mathbf{z}_t = 0, \ \forall l\in \mathbf{L} \ \forall t\in \mathbf{T}  \label{SP2_2}  \\
 & B^m\mathbf{g}^m_t+B^w\mathbf{g}^w_t+B^f\mathbf{f}_t+B^{a}\mathbf{\theta}_t \geq \mathbf{b} + \mathbf{Ku}^* \ \forall t\in \mathbf{T}    \label{SP2_3}   \\
  & \mathbf{p}^m\mathbf{g}^m_t+\mathbf{\rho}\mathbf{s}_t  \leq \Phi_t(\mathbf{u^*}) \ \forall t\in \mathbf{T}  \label{SP2_4} \\
  & \mathbf{g}^m_t\geq \mathbf{0}, \mathbf{g}^w_t\geq \mathbf{0}, \mathbf{s}_t\geq 0, \mathbf{z}_t\in \{0,1\}^{|\mathbf{L}|}  \ \forall t\in \mathbf{T}  \label{SP2_5}
 \end{align}
 \end{subequations}

Next, we provide the concrete steps of our decomposition algorithm. Let $LB$, $UB$ represent lower and upper bounds of the algorithm and $\varepsilon$ represent the given optimality tolerance.

\vspace{5pt}
\noindent\rule{8cm}{0.4pt}


\noindent \textbf{Step 1}  Set $LB = - \infty$, $UB = +\infty$,
$\mathbf{\hat Z}_t=\emptyset$ for $t\in \mathbf{T}$, and the iteration counter $\mathbf{j} = 1$.\\

\noindent \textbf{Step 2} Solve the master problem (\textbf{MP}).
\begin{subequations}\label{eq_master}
\begin{align}
(\mathbf{MP}) \ \Theta = \max \ & \kappa\sum_{t\in T} L_t (\mathbf{1}^T\mathbf{g}^{w}_t) -
  (\mathbf{cu}+ \mathbf{hx})  \label{eq_obj_MILP}\\ %
  \mbox{s.t.} \ & (\ref{M2} - \ref{eq_tilde_end}) \label{MI_same} \\ %
   & (\ref{eq_rewritten_linkKKT}- \ref{MI_Var}) \ \forall \mathbf{z}^{q*}_t\in \mathbf{\hat Z}_t, \forall t\in \mathbf{T}
 \end{align}
 \end{subequations}
  Derive an optimal solution, report the upper level decisions $\mathbf{x}^*$ and $\mathbf{u}^*$, and update $UB = \min\{\Theta,  UB\}$.\\

\noindent \textbf{Step 3} Solve subproblems $\mathbf{SP1}$ for given $\mathbf{u}^*$ and for  $t\in \mathbf{T}$ and report their optimal values $\Phi_t(\mathbf{u}^*)$. \\

\noindent \textbf{Step 4} Solve subproblem $\mathbf{SP2}$ for given $\mathbf{u}^*$ and $\Phi_t(\mathbf{u}^*)$. Report optimal binary variables $\mathbf{z}^{q*}_t,  t\in \mathbf{T}$   and
the optimal value $\overline \Phi(\mathbf{u^*})$. Update
$LB = \max\{ \overline \Phi(\mathbf{u^*}), LB\}$.\\

\noindent \textbf{Step 5} If $\frac{UB - LB}{|LB|} \leq\varepsilon$, return $LB$, report $(\mathbf{x}^*, \mathbf{u}^*)$ and terminate. Otherwise, goto \textbf{Step 6}.\\

\noindent \textbf{Step 6} Set $\mathbf{j}= \mathbf{j} + 1$, and update
$\mathbf{\hat Z}_t=\mathbf{\hat Z}_t\cup\{\mathbf{z}^{q*}_t\}$  for $t\in \mathbf{T}$. Go to \textbf{Step 2}.
\noindent\rule{8cm}{0.4pt}\\

\vspace{10pt}
Note that the  complementarity constraints in \textbf{MP} can be linearized as in \eqref{Linearization}, which results in an MIP problem. So, all subproblems and master problem can be computed by a professional MIP solver. Following above steps, our algorithm terminates with an optimal wind power generation expansion plan $(\mathbf{x}^*, \mathbf{u}^*)$ in finite iterations \cite{Zeng2014Solving}.

\subsection{Grid Structure Based Algorithm Enhancements}
It can be anticipated that our decomposition algorithm, as an iterative procedure, will have a heavy computational burden to deal with large-scale power grids, especially for the master problem. To alleviate that computational burden, we propose to analyze the underlying network structure,  eliminate unnecessary variables, and generate valid inequalities to narrow down the search space.

Situations under our consideration satisfy these two assumptions: $(i)$ the least load shedding cost, i.e., $\displaystyle\min_{i\in \Psi}\rho_i$, is strictly higher than  the most expensive generation cost, i.e., $\displaystyle\max_{j\in \Omega, b\in B_j}p_{j,b}$; and $(ii)$ even without any wind power generation, i.e., $\displaystyle x_i=0, i\in \Psi$, the lower level problem \eqref{eq_LowMIP} has a market clearing solution without incurring load shedding. The first assumption reflects the actual practice that heavily penalizes any possible forced load shedding. The second assumption simply states that the existing grid is able to use fuel-based generators to satisfy demand, which is valid for most systems that have to hedge against  wind power intermittence and volatility.

Consider bus $i$ and let $\mathbf{L}(i)$ be the set of lines that link to $i$.
\begin{prop}
\label{prop_singlebus}
If bus $i$ has a non-zero demand in demand block $t$, and is neither equipped with fuel-based generation nor eligible for wind farm installation, we have
\begin{eqnarray}
\label{eq_eh_1}
\sum_{l\in \mathbf{L}(i)} z_{l,t}\geq 1
\end{eqnarray}
is valid for any optimal solution to (\ref{eq_BiMIP}-\ref{eq_LowMIP}).
\end{prop}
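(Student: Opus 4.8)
The plan is to argue by contradiction at the level of the lower-level market-clearing problem. Since every optimal solution of the bilevel model (\ref{eq_BiMIP}-\ref{eq_LowMIP}) must induce, in each block $t$, a solution of \eqref{eq_LowMIP} that is optimal for the realized $(\mathbf{x}^*,\mathbf{u}^*)$, it suffices to show that any optimal response of \eqref{eq_LowMIP} energizes at least one line incident to bus $i$. So I would fix $t$ together with the optimal $(\mathbf{x}^*,\mathbf{u}^*)$, and suppose for contradiction that $\sum_{l\in\mathbf{L}(i)}z_{l,t}=0$, i.e. every line touching $i$ is switched off.

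First I would translate this topology assumption into a forced load-shedding statement. With $z_{l,t}=0$ for all $l\in\mathbf{L}(i)$, constraint \eqref{eq_flowline_LW} forces $f_{l,t}=0$ on each such line. Because $i\notin\Psi$ and $\Omega_i=\emptyset$, bus $i$ carries neither wind nor fuel generation, so its balance equation \eqref{eq_flow2_LW} collapses to $s_{i,t}=D_{i,t}$; since the demand is non-zero this is strictly positive and contributes exactly $\rho_i D_{i,t}$ to the lower-level objective \eqref{eq_obj_LW}.

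The core of the argument is then a cost-exchange showing that such a response cannot be optimal. Invoking assumption $(ii)$, there exists a feasible market-clearing configuration serving all demand without any load shedding, and it remains feasible once the available wind $\mathbf{u}^*$ is admitted, since wind injection is optional and free. Relative to the isolating response, recovering the $D_{i,t}$ units that were shed requires at most $D_{i,t}$ extra units of fuel generation, whose cost is bounded by $(\max_{j,b}p_{jb})\,D_{i,t}$; by the cost dominance in assumption $(i)$ this is strictly smaller than the eliminated shedding penalty $\rho_i D_{i,t}$. Hence a no-shedding configuration attains a strictly smaller value of \eqref{eq_obj_LW}, contradicting optimality of the isolating response, and so at least one incident line must be energized, giving $\sum_{l\in\mathbf{L}(i)}z_{l,t}\ge 1$.

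I expect the delicate step to be the exchange itself, specifically making precise that the saved shedding can be converted into bounded extra generation while keeping the power-flow relations \eqref{eq_flowline_LW}, the flow limits \eqref{eq_flowB_LW}, and the phase-angle bounds \eqref{eq_angleB_LW} simultaneously feasible. The binary topology variables and the bilinear term $\theta^T_t J\mathbf{z}_t$ preclude a naive convex-combination argument between the incumbent and an arbitrary no-shedding point, so the guarantee that some feasible no-shedding operating point exists (assumption $(ii)$) is exactly what must be leveraged to route power into bus $i$, while assumption $(i)$ supplies the strict cost gap. Some care is also needed to confirm that the strict inequality rules out ties, so that the conclusion holds for every optimal solution rather than for merely one optimal solution.
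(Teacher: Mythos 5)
Your argument follows the same route as the paper's own proof: both reduce the claim to showing that, under assumptions $(i)$ and $(ii)$, no optimal response of \eqref{eq_LowMIP} can leave bus $i$ isolated, since isolation forces $s_{i,t}=D_{i,t}>0$ through \eqref{eq_flowline_LW} and \eqref{eq_flow2_LW}. The difference is that the paper simply asserts the conclusion (``we can easily conclude\dots''), whereas you try to justify it by an exchange argument --- and the step you yourself flag as delicate is exactly where the proof breaks, and it cannot be closed from $(i)$--$(ii)$ alone. The inequality you need, namely that the no-shedding point of assumption $(ii)$ costs at most the isolating response's generation cost plus $(\max_{j,b}p_{jb})D_{i,t}$, does not follow: that point is an arbitrary feasible configuration, not one obtained from the isolating response by adding at most $D_{i,t}$ units of generation. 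What the bound implicitly asserts is that the marginal cost of serving demand at a bus never exceeds the largest offer price; under DC power flow with the phase-angle bounds \eqref{eq_angleB_LW} (or with congested Kirchhoff loops) this is false --- it is the familiar fact that locational marginal prices can exceed every generation offer.

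The failure is not merely expositional. Take a path $1$--$2$--$3$ with unit susceptances, bus $1$ the reference bus, $\overline\theta=20$, ample line and generator capacities, a price-$1$ generator at bus $1$, a price-$10$ generator at bus $2$, demands $D_2=20$ and $D_3=5$, $\rho_i=15$ at every bus, and bus $3\notin\Psi$. Assumption $(i)$ holds ($15>10$) and so does $(ii)$: serving everything is feasible with $g_1=15$, $g_2=10$. But any response serving $x$ units at bus $3$ has $g_1=f_{12}\le\overline\theta-x$, because $f_{12}+f_{23}=\theta_1-\theta_3\le\overline\theta$ and $f_{23}=x$; bus $2$'s balance then forces $g_2+s_2\ge 2x$. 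So each unit served at bus $3$ adds two units of expensive generation while displacing one unit of cheap generation, a marginal cost of $2\cdot 10-1=19>\rho_3=15$: every optimal response sheds all of $D_3$, with minimum cost $95$ attained both with line $2$--$3$ closed at zero flow and with it switched off. The switched-off response yields an optimal solution of (\ref{eq_BiMIP}-\ref{eq_LowMIP}) violating \eqref{eq_eh_1} (it also contradicts the paper's remark that a singleton $\mathbf{L}(i)$ forces $z_{l_1,t}=1$). So the proposition as stated needs stronger hypotheses than $(i)$--$(ii)$; what does survive, and is all the decomposition algorithm needs, is the weaker statement that imposing \eqref{eq_eh_1} never changes the optimal value, because an optimal response isolating a non-reference bus $i$ can always be repaired at equal cost by closing one incident line $(i,k)$ and setting $\theta_{i,t}=\theta_{k,t}$. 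If you want the proposition literally as claimed, you must assume something like: at every service level, the marginal cost of meeting demand at each bus stays below $\min_i\rho_i$ --- which is precisely what your exchange step presupposes.
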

\begin{proof}
Based on the aforementioned two assumptions, we can easily conclude that, regardless of the upper level decisions on wind farm installations,
bus $i$ must be connected to the rest of grid in any optimal solution to \eqref{eq_LowMIP} defined for that demand block $t$. So, \eqref{eq_eh_1} follows naturally.
\end{proof}
Without hurting any possible optimal solutions, we can include \eqref{eq_eh_1} into the lower level problem \eqref{eq_LowMIP}. Note that when $\mathbf{L}(i)$ is a singleton, say $\{l_1\}$, we can simply eliminate variable $z_{l_1,t}$ from \eqref{eq_LowMIP} as $z_{l_1,t}$ equals to one.

The idea behind inequality \eqref{eq_eh_1} can be generalized from a single bus to a partition of the grid. Consider a partition $(\mathbf{V}, \mathbf{W})$ of $\mathbf{I}$, i.e.,
$\mathbf{V}\cup\mathbf{W}=\mathbf{I}$ and $\mathbf{V}\cap\mathbf{W}=\emptyset$. Let $\mathbf{L}(\mathbf{V}, \mathbf{W})$ be the collection of lines between $\mathbf{V}$ and $\mathbf{W}$.

\begin{prop}
\label{prop_twopartition}
 If the aggregated generation capacity within $\mathbf{V}$ or $\mathbf{W}$, is not sufficient to meet with demand, i.e.,  either \\
  $\sum_{i\in \mathbf{V}} (D_i,t-\sum_{j\in \Omega_i}\sum_{b\in B_j}P_{jb})-\sum_{i\in \Psi\cap\mathbf{V}}\overline U_i>0$, or \\
 or $\sum_{i\in \mathbf{W}} (D_i,t-\sum_{j\in \Omega_i}\sum_{b\in B_j}P_{jb})-\sum_{i\in \Psi\cap\mathbf{W}}\overline U_i>0,$,  we have
\begin{eqnarray}
\label{eq_eh_2}
\sum_{l\in \mathbf{L}(\mathbf{V}, \mathbf{W})} z_{l,t}\geq 1.
\end{eqnarray}
is valid for any optimal solution to (\ref{eq_BiMIP}-\ref{eq_LowMIP}).
\end{prop}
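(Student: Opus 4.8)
The plan is to generalize the argument for Proposition~\ref{prop_singlebus} by treating the entire vertex set $\mathbf{V}$ (or $\mathbf{W}$) as a single aggregated node. First I would argue by contradiction: suppose that in some optimal solution to (\ref{eq_BiMIP}-\ref{eq_LowMIP}), all lines crossing the cut are switched off, i.e. $z_{l,t}=0$ for every $l\in\mathbf{L}(\mathbf{V},\mathbf{W})$. Then the grid decouples into two electrically isolated subsystems over $\mathbf{V}$ and $\mathbf{W}$ in demand block $t$, since by \eqref{eq_flowline_LW} every cross-cut flow $f_{l,t}$ is forced to zero. Consequently the power-balance constraints \eqref{eq_flow1_LW}--\eqref{eq_flow2_LW} restricted to $\mathbf{V}$ must be satisfied using only internal resources of $\mathbf{V}$: its fuel-based generation, its wind injection, and load shedding on its own buses.

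Next I would sum the power-balance equations over all $i\in\mathbf{V}$. The internal flow terms cancel in pairs (each line with both endpoints in $\mathbf{V}$ contributes $+f_{l,t}$ at one bus and $-f_{l,t}$ at the other), and the cross-cut flows are zero by assumption, so the aggregate balance reduces to
\begin{equation*}
\sum_{i\in\mathbf{V}}\Big(\sum_{j\in\Omega_i}\sum_{b\in\mathbf{B}_j} g^m_{jb,t}+g^w_{i,t}+s_{i,t}\Big)=\sum_{i\in\mathbf{V}}D_{i,t}.
\end{equation*}
The total generation available inside $\mathbf{V}$ is bounded by the capacity limits \eqref{eq_genUB_LW} and \eqref{eq_wind_LW} together with \eqref{eq_installation_UP}, giving $\sum g^m_{jb,t}\le\sum_{i\in\mathbf{V}}\sum_{j\in\Omega_i}\sum_{b\in\mathbf{B}_j}P_{jb}$ and $\sum g^w_{i,t}\le\sum_{i\in\Psi\cap\mathbf{V}}\overline U_i$ (using $g^w_{i,t}\le k_{i,t}u_i\le u_i\le\overline U_i$). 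Under the stated hypothesis that $\sum_{i\in\mathbf{V}}(D_{i,t}-\sum_{j\in\Omega_i}\sum_{b\in\mathbf{B}_j}P_{jb})-\sum_{i\in\Psi\cap\mathbf{V}}\overline U_i>0$, the demand strictly exceeds the maximum possible internal supply, so feasibility of the aggregate balance forces $\sum_{i\in\mathbf{V}}s_{i,t}>0$; that is, some load shedding must occur within $\mathbf{V}$.

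Finally I would invoke assumptions $(i)$ and $(ii)$ to rule this out at optimality. By assumption $(ii)$, a market-clearing solution with \emph{no} load shedding exists (with all lines available); by assumption $(i)$, the per-unit shedding penalty $\rho_i$ strictly exceeds every generation price $p_{jb}$, so any solution incurring positive shedding has strictly higher lower-level objective \eqref{eq_obj_LW} than one that instead serves that load via generation through a connected grid. Hence a solution that isolates $\mathbf{V}$ and sheds load cannot be optimal for the lower-level clearing problem in block $t$, contradicting optimality. Therefore at least one cross-cut line must stay closed, which is exactly \eqref{eq_eh_2}; the symmetric hypothesis on $\mathbf{W}$ gives the same conclusion. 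The main obstacle to watch is making the ``isolation forces shedding'' step airtight: I must confirm that switching off the cut genuinely decouples the two regions (no hidden coupling through phase angles, which is fine since a disconnected $z_{l,t}=0$ removes the line from \eqref{eq_flowline_LW} entirely) and that the capacity bounds I aggregate are the correct tight upper bounds appearing in the hypothesis.
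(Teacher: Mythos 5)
Your first two steps are solid and, in fact, go beyond what the paper itself offers: the paper states this proposition with no proof at all, implicitly reusing the two-sentence argument it gave for Proposition~\ref{prop_singlebus}. Your argument---switching off the whole cut forces $f_{l,t}=0$ on every cut line by \eqref{eq_flowline_LW}, summing \eqref{eq_flow1_LW}--\eqref{eq_flow2_LW} over $\mathbf{V}$ cancels internal flows, and the capacity bounds \eqref{eq_genUB_LW}, \eqref{eq_wind_LW}, \eqref{eq_installation_UP} then force $\sum_{i\in\mathbf{V}}s_{i,t}>0$---is a correct and rigorous formalization of ``isolation forces shedding'' (modulo the implicit normalization $k_{i,t}\le 1$, which you noted and which the proposition's use of $\overline U_i$ as the wind bound already presumes).

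The genuine gap is in your final step, and it is not the obstacle you flagged. You argue: assumption $(ii)$ gives a no-shedding solution $B$, assumption $(i)$ gives $\rho_i>p_{jb}$, hence the islanded solution $A^*$ with positive shedding has strictly higher lower-level cost than $B$. This inference is false, because $A^*$ may absorb free wind that the fuel-only benchmark $B$ forgoes. Concretely: let bus $1$ carry $100$ MW of installed wind ($k=1$), bus $2$ carry demand $100$ and a fuel unit of capacity $200$ at price $p=1$, and let $\mathbf{V}$ be a single bus with demand $10$, no generation, and $\rho=10$. Both assumptions hold, yet islanding $\mathbf{V}$ costs $0\cdot p+10\rho=100$ while the fuel-only no-shedding solution costs $110\,p=110>100$; your claimed inequality fails, so no contradiction with optimality of $A^*$ is obtained---even though \eqref{eq_eh_2} is still valid there, since closing one cut line and serving the $10$ MW with fuel while keeping all the wind costs only $10$. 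What the proof actually requires is a comparison solution that retains $A^*$'s wind absorption \emph{and} serves the shed load, and constructing it is the real content: close exactly one cut line, observe that it is then a bridge whose flow is unconstrained by Kirchhoff's voltage law (so the islanded flow pattern is reproduced at zero bridge flow, and a uniform shift of the $\mathbf{V}$-side angles tunes the bridge flow), and then show that spare fuel capacity on the $\mathbf{W}$ side can be routed across the bridge to the shedding bus without violating line capacities or the angle bounds \eqref{eq_angleB_LW}. Neither assumption $(i)$ nor $(ii)$ delivers that routing feasibility for free; until it is established, the chain ``shedding $\Rightarrow$ not optimal'' remains an assertion rather than a proof.
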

Again, \eqref{eq_eh_2} can be included into \eqref{eq_LowMIP} defined for that demand block $t$

\begin{figure}[!htbp]
\begin{center}
\includegraphics[scale=0.4]{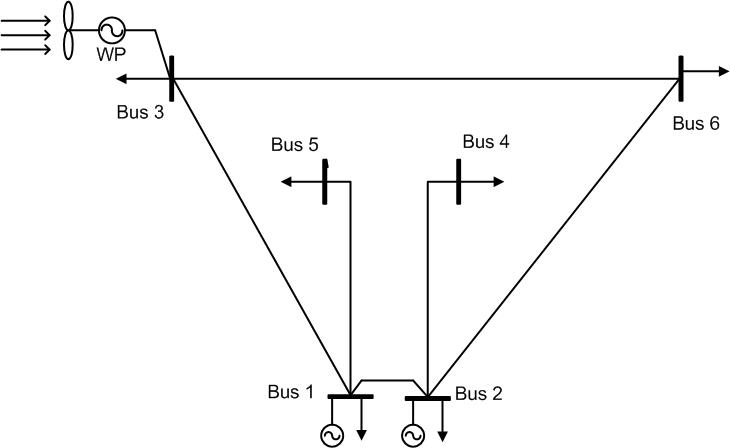}
\end{center}
\caption{6-Bus power system}\label{fig:6bus}
\end{figure}

Next, we use a 6-bus system presented in Figure
\ref{fig:6bus} to illustrate those valid inequalities. We assume that all lines have  sufficient capacities and ignore the phase angle issue to simplify our illustration.
Suppose that we have a single demand block (hence we drop subscript $t$) and demands on those six buses are $5, 10, 0 , 5, 15, 10$ MW, respectively. Buses 1 and 2 have two fuel-based generators that have 30 and 20 MW generation capacities, respectively. Bus 3 is a node that is eligible to install a wind farm with up to 10 MW capacity.

By inspecting lines linking to buses 4 and 5, we can easily conclude that the line 1-5  and the line 2-4 cannot be disconnected. So, variables $z_{1-5}$ and $z_{2-4}$ can be eliminated. In addition, for bus 6, we have $$z_{2-6}+z_{3-6}\geq 1,$$ according to Proposition \ref{prop_singlebus}. Consider the subset
$\mathbf{V}$ that includes buses $2,4,6$. Note that its aggregated generation capacity is 20 MW while its aggregated demand is 25 MW. Hence, based on Proposition \ref{prop_twopartition}, we have $$z_{1-2}+z_{3-6}\geq 1.$$

\noindent\textbf{Remark:}\\
\noindent $(i)$ In our numerical study, we perform a pre-processing step to analyze the network structure, heuristically generate \eqref{eq_eh_1} and \eqref{eq_eh_2}, and then include all of them in a priori fashion to the lower level problem.
For instances from large systems, we observe a significant computational time reduction. One explanation is that, although those inequalities are of simple structures, they probably reflect the sparsity nature of real power grids and effectively reduce the solution space. \\
\noindent $(ii)$ We mention that both \eqref{eq_eh_1} and \eqref{eq_eh_2} can be further strengthened by considering line capacities. So, their right-hand-sides are not necessary equal to 1. Also, there are possibly a huge number of inequalities in the form of  \eqref{eq_eh_2} that can be generated. Hence, future research directions include how to develop valid inequalities stronger than \eqref{eq_eh_1} or \eqref{eq_eh_2} and designing novel strategies for on-the-fly generation of those valid inequalities.

\section{Computational Results}
In this section, we first describe our test data sets and computational platform. Then, we present and  analyze results regarding our computational methods. Finally, we
give detailed numerical results and discuss the impact of topology control.

\subsection{Data Sets and Computational Platform}
Our test bed consists a few popular IEEE systems\cite{Christie2000Power}, including RTS-96 (24-bus),57-bus and 118-bus systems. Parameters of those systems are modified based on original data, where demands, generation and line capacities are multiplied by 1.5, 10 and 5 for RTS-96, 57-bus and 118-bus systems, respectively. Those demands are used as base loads to generate demand blocks based on Figure \ref{fig:LW_DC}. Specifically, 4 demand blocks, whose durations are 1200, 3600, 2400 and 1560 hours respectively, have 93\%, 81.7\%, 69.2\% and 59\% of the base loads respectively. Also,  wind intensities are 31.7\%, 30.6\%, 43.2\% and 30.4\% of the base wind intensity, which is adopted from  \cite{Baringo2011Wind}.

As for a generator, we partition the whole capacity into several production blocks with different prices. They are 50\%, 30\% and 20\% of its total capacity for 3 blocks or 40\%, 30\%, 20\% and 10\% for 4 blocks, respectively. Then, prices of blocks are random generated from ranges that are increasing and non-overlapping as in \cite{Baringo2011Wind}. For RTS-96 system, we have 4-6 sites that are eligible for wind farm installation. As for 57-bus and 118-bus systems, 7 to 15 sites are eligible. The total budget for wind farm installation are \$3,000 million, \$8,000 million and \$12,000 million for those 3 systems, respectively. Cost of wind generation capacity is \$0.8 million for one MW  and the fixed cost equals to the cost of 20 MW capacity for all test beds. Capacity upper bounds for wind farm installations are obtained by randomly allocating  $\frac{\textrm{total budget}}{\textrm{\$0.7 M}}$ megawatts among all eligible sites. We consider a 15-year planning period and an 8\% interest rate, which means 11.68\% is used to compute annualized  fixed cost and investment cost.  Finally, for each test bed, we randomly generate 5 instances and obtain 15 instances overall. For all of them, the value of $\kappa$ is set to 10 in our computation.

Our algorithm, including the variant with enhancement, is implemented in  C++ and tested on a  PC with Intel Core i5-45990 CPU of 3.30GHz and 4GB memory. CPLEX 12.6 \cite{CPLEX2014} is called as an MIP solver to compute master and subproblems, where $M$ is set to $10^6$ to linearize complementarity constraints. The optimality tolerance of our algorithm is set to 0.1\%, the optimality gap inside CPLEX is set to 0.01\%,  the time limit for one iteration is 600 seconds and the overall time  limit is 3,600 seconds.

\subsection{Performance of Computational Methods}
%

We first select one instance of 118-bus system to illustrate the convergence of the standard reformulation and decomposition computational scheme.  Note that this system has 177 lines, which requires 177 binary line switching variables in the lower level problem. Along with hundred continuous variables and constraints for fuel-based generation, phase angles, and wind power absorption,  the lower level problem is clearly sophisticated and such scale bilevel MIP instance is assumed to be very challenging to compute. Nevertheless, our method demonstrates a quick convergence behavior as observed in Figure \ref{fig:Converge}. Only 4 iterations and about 2500 CPU seconds are sufficient to close the gap within the tolerance. Indeed, as shown in the following table, our enhanced method can further reduce the number of iterations and  the overall computational expenses.

\begin{figure}[!htbp]
\begin{center}
\includegraphics[width=0.6\textwidth]{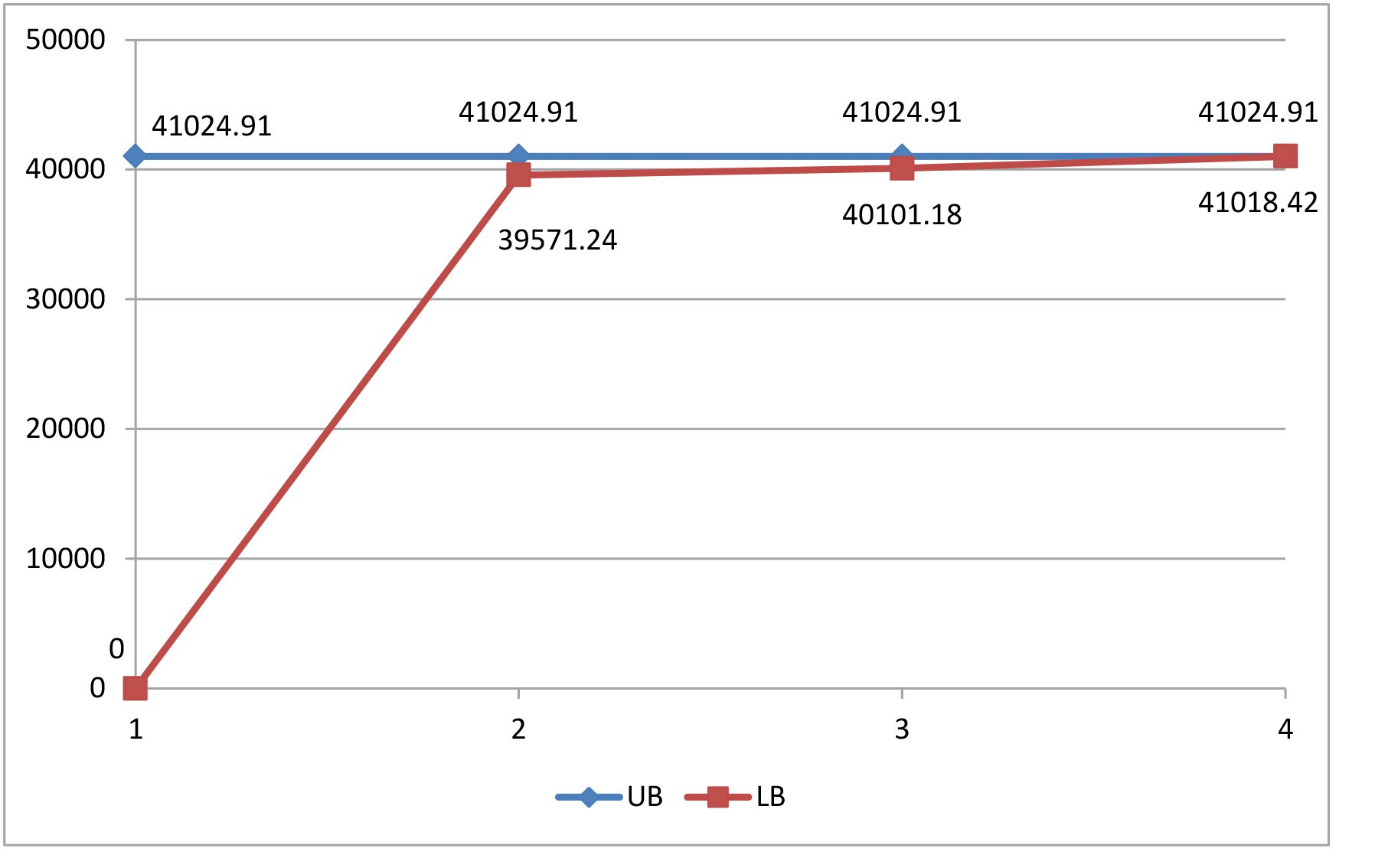}
\end{center}
\caption{Convergence over iterations}\label{fig:Converge}
\end{figure}

Table \ref{tbl:computation} reports the detailed results regarding the algorithm performance of both the standard method and its enhanced variant on all  instances.
Column ``\# Iter'' displays the number of iterations before the algorithm is terminated. Columns ``UB'', ``LB'' and ``Gap'' provide upper and lower bounds, and
the relative gap between them  at termination. Column ``CPU (s)'' gives the overall computational time in seconds. Row ``Ave.'' computes mean values of comparable columns over all instances for each test bed. Based on information presented in this table, we have the following two observations:

\noindent $(i)$ The customized reformulation and decomposition can effectively handle the challenge from binary line switching variables, and exhibits a strong capacity to compute our bilevel wind power generation capacity planning problem. As can be seen, for all instances,  it can derive optimal solutions with a few iterations in a reasonable time. Hence, we believe that it provides a fundamental platform for more advanced study, such as computing those using stochastic scenarios to capture wind or load uncertainties.\\
\noindent $(ii)$ The enhanced variant demonstrates a significant improvement over the standard one. Note that it generally can reduce the overall computational time by 50\%. For 57 and 118-bus systems, it also reduces the number necessary iterations by about 20\%. Those evidences definitely support us to pursue research in this direction by developing stronger valid inequalities and computationally friendly implementation methods.

\begin{table}[]
\centering
\caption{Computational performance}\label{table:performance}
\scalebox{0.7}{
\begin{tabular}{@{}lllllllllllll@{}}
\toprule
\multicolumn{1}{c}{\multirow{2}{*}{}}   & \multicolumn{6}{c}{Standard}                                 &  & \multicolumn{5}{c}{Enhanced}                       \\ \cmidrule(lr){3-7} \cmidrule(l){9-13}
\multicolumn{1}{c}{}                       & ID   & \# Iter & UB       & LB       & Gap    & CPU (s) &  & \# Iter & UB       & LB       & Gap    & CPU (s) \\ \midrule
\multicolumn{1}{c}{\multirow{6}{*}{24-bus}} & 1    & 2    & 10022.63 & 10022.63 & 0.00\% & 54.12     &  & 2    & 10022.63 & 10022.63 & 0.00\% & 13.24     \\
\multicolumn{1}{c}{}                       & 2    & 2    & 10022.63 & 10022.63 & 0.00\% & 42.36     &  & 2    & 10022.63 & 10022.63 & 0.00\% & 15.72     \\
\multicolumn{1}{c}{}                       & 3    & 2    & 10022.63 & 10022.63 & 0.00\% & 59.11     &  & 2    & 10022.63 & 10022.63 & 0.00\% & 29.69     \\
\multicolumn{1}{c}{}                       & 4    & 2    & 10022.63 & 10022.63 & 0.00\% & 89.1      &  & 2    & 10022.63 & 10022.63 & 0.00\% & 40.62     \\
\multicolumn{1}{c}{}                       & 5    & 2    & 10022.63 & 10022.63 & 0.00\% & 71.06     &  & 2    & 10022.63 & 10022.63 & 0.00\% & 28.69     \\
\multicolumn{1}{c}{}                       & \textbf{Ave.} & \textbf{2}    &          &          & \textbf{0.00\%} & \textbf{63.15}     &  & \textbf{2}    &          &          & \textbf{0.00\%} & \textbf{25.59}     \\ \midrule
\multirow{6}{*}{57-bus}                     & 1    & 3    & 27349.94 & 27343.69 & 0.02\% & 178.14    &  & 2    & 27349.94 & 27349.94 & 0.00\% & 48.19     \\
                                           & 2    & 2    & 27349.94 & 27349.94 & 0.00\% & 57.18     &  & 2    & 27349.94 & 27349.94 & 0.00\% & 74.54     \\
                                           & 3    & 3    & 27349.01 & 27324.15 & 0.09\% & 196.31    &  & 3    & 27349.01 & 27337.18 & 0.04\% & 104.68    \\
                                           & 4    & 3    & 27034.87 & 27013.46 & 0.08\% & 105.82    &  & 2    & 27060.15 & 27033.77 & 0.10\% & 56.17     \\
                                           & 5    & 2    & 27349.94 & 27323.98 & 0.09\% & 30.09     &  & 2    & 27349.94 & 27349.94 & 0.00\% & 20.91     \\
                                           & \textbf{Ave.} & \textbf{2.6}  &          &          & \textbf{0.06\%} & \textbf{113.51}    &  & \textbf{2.2}  &          &          & \textbf{0.03\%} & \textbf{60.9}      \\ \midrule
\multirow{6}{*}{118-bus}                    & 1    & 4    & 41024.91 & 41018.42
 & 0.02\% & 2532.17   &  & 3    & 41024.91 & 41024.91 & 0.00\% & 1033.54   \\
                                           & 2    & 2    & 40743.34 & 40710.36 & 0.08\% & 290.08    &  & 2    & 40721.1  & 40688.4  & 0.08\% & 344.18    \\
                                           & 3    & 3    & 41024.91 & 41013.84 & 0.03\% & 788.14    &  & 2    & 41024.91 & 41024.91 & 0.00\% & 299.54    \\
                                           & 4    & 3    & 39927.23 & 39887.18 & 0.10\% & 655.49    &  & 2    & 39925.01 & 39887.18 & 0.09\% & 198.36    \\
                                           & 5    & 3    & 41024.91 & 41013.39 & 0.03\% & 921.19    &  & 3    & 41024.91 & 41024.91 & 0.00\% & 741.03    \\
                                           & \textbf{Ave.} & \textbf{3}    &          &          & \textbf{0.05\%} & \textbf{1037.41}   &  & \textbf{2.4}  &          &          & \textbf{0.04\%} & \textbf{523.33}    \\ \bottomrule
\end{tabular}}
\label{tbl:computation}
\end{table}

\subsection{Impact of Topology Control}
We first demonstrate the impact of topology control on the locations of wind farms.  Figure \ref{fig:TC_24} presents wind farm installation plans for an RTS-96 24-bus instance with and without topology control. Note that 5 possible wind farm sites (at bus 7, 13, 17, 22 and 24) are remarked with circles.  If no topology control is employed, we observe in Figure \ref{figWithout} that wind farms are installed at 4 eligible sites (at bus 7, 13, 17 and 24).  Nevertheless, if topology control is implemented, as shown in Figure \ref{figWith}, a different plan is observed, where 4 sites (at bus 7, 13, 22 and 24) are installed with wind farms. Figure \ref{figWith} also shows that 6 lines, which are marked with stars, are switched off in the second demand block. In that block,  wind power absorption is 397.489 MW, higher than 395.801 MW if no topology control is implemented. Indeed, as more wind power is utilized, fuel-based generation occurred in expensive production blocks becomes less, which further leads to an expenditure reduction in market clearing.

\begin{figure}[t!h]
    \centering
    \begin{subfigure}{0.45\textwidth}
        \centering
       \includegraphics[height=2.9in]{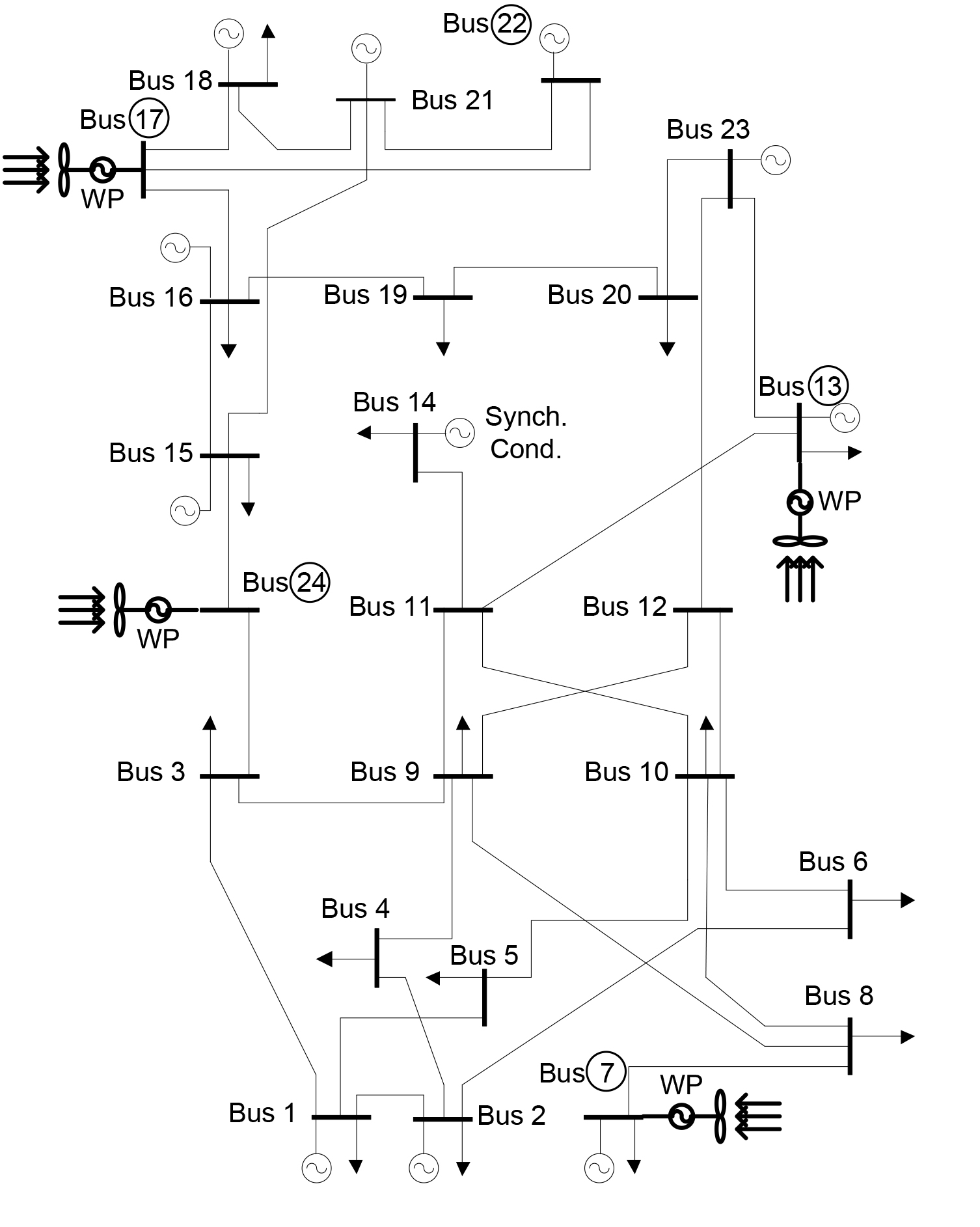}
        \caption{Installation without \protect\\ topology control}\label{figWithout}
    \end{subfigure}%
    \hspace{0.2cm}
    \begin{subfigure}{0.45\textwidth}
        \centering
      \includegraphics[height=2.9in]{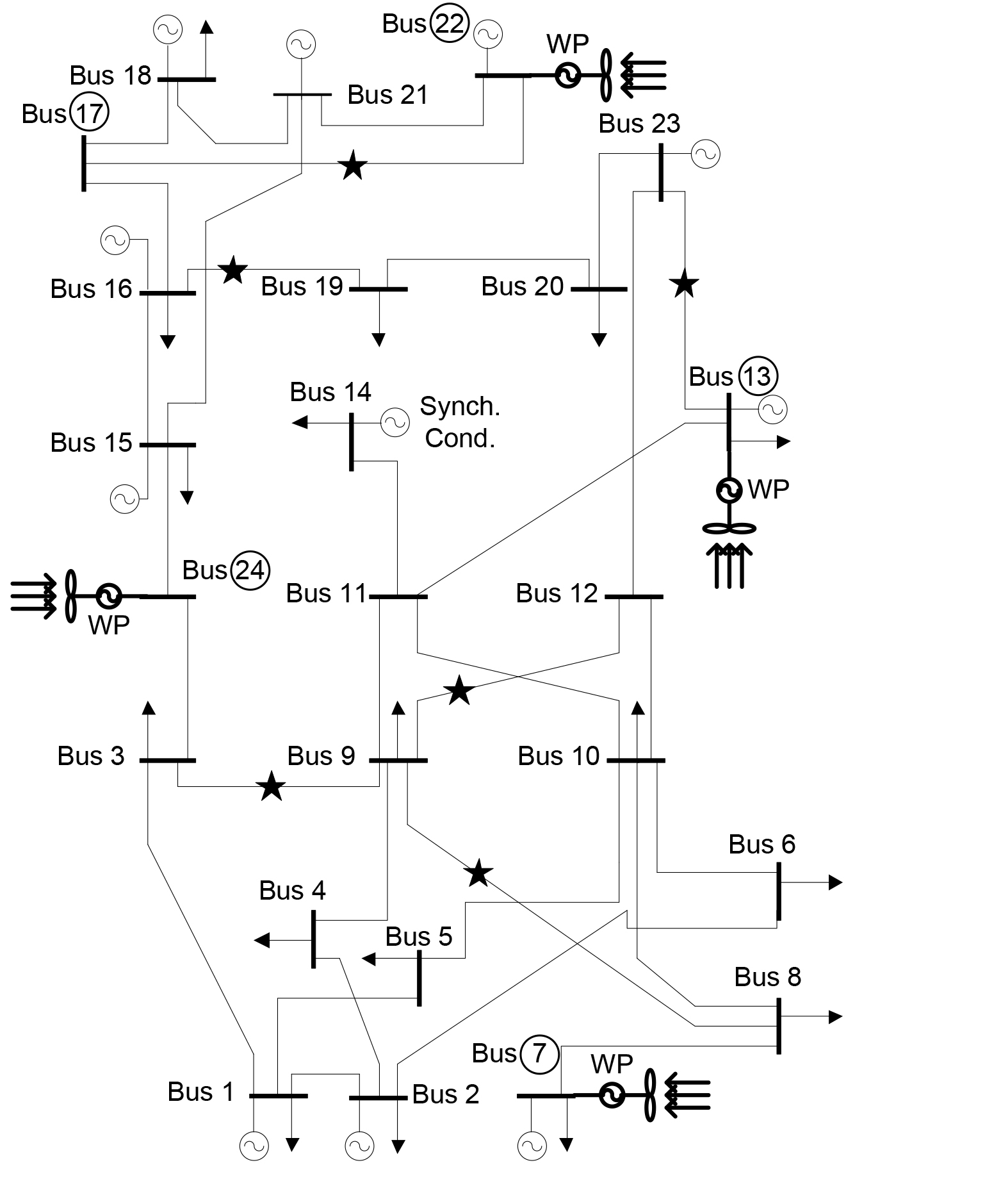}
        \caption{Installation with \protect\\ topology control}\label{figWith}
    \end{subfigure}
    \caption{Wind farm installation on RTS-96 24-bus system} \label{fig:TC_24}
\end{figure}

Table \ref{table:tc} reports the detailed results between models with and without topology control on all  instances.
Columns ``Obj.'' gives optimal value of the objective function of the upper level problem, while column ``Wind'' provides value of the first term, i.e., the weighted wind power absorption, in that objective function. Columns ``Obj-Impr.'' and ``Wind-Impr.'' compute the relative improvements due to topology control in terms of  ``Obj'' and ``Wind''. Based on information presented in this table, especially for 57-bus and 118 bus systems, we can conclude that topology control, by better placing wind farms and absorbing more wind power in dispatching, could be very helpful to reduce wind power curtailment and improve the wind penetration level.  Such improvements are non-trivial and could be more significant for large systems. As shown for instances of 118-bus system, on average almost 5\% (almost 8\% for one individual case) more wind power could be integrated.  Hence, we do believe that topology control definitely are critical and beneficial to fully make use of wind power and achieve the 20\% \cite{lindenberg200820} or 25\% \cite{davidson2016modelling} wind penetration target. Also,  as mentioned earlier, since less fuel-based generation is needed, electricity generated at a high cost will be reduced so that an economic advantage will be produced as well.

\begin{table}[t!]
\centering
\caption{Impact of topology control}\label{table:tc}
\scalebox{0.85}{
\begin{tabular}{@{}llllllllll@{}}
\toprule
\multirow{2}{*}{}    & \multirow{2}{*}{ID} & Without TC      &          &  & With TC    &          &  & \multirow{2}{*}{Obj-Impr.} & \multirow{2}{*}{Wind-Impr.} \\ \cmidrule(lr){3-4} \cmidrule(lr){6-7}
                        &                     & Obj      & Wind     &  & Obj      & Wind     &  &                       &                       \\ \midrule
\multirow{6}{*}{24-bus}  & 1                   & 10022.63 & 9672.23  &  & 10022.63 & 9672.23  &  & 0.00\%                & 0.00\%                \\
                        & 2                   & 10022.63 & 9672.23  &  & 10022.63 & 9672.23  &  & 0.00\%                & 0.00\%                \\
                        & 3                   & 10022.63 & 9672.23  &  & 10022.63 & 9672.23  &  & 0.00\%                & 0.00\%                \\
                        & 4                   & 9981.55  & 9631.15  &  & 10022.63 & 9672.23  &  & 0.41\%                & 0.43\%                \\
                        & 5                   & 10022.63 & 9672.23  &  & 10022.63 & 9672.23  &  & 0.00\%                & 0.00\%                \\
                        & \textbf{Ave.}                &          &          &  &          &          &  & \textbf{0.08\%}                & \textbf{0.09\%}                \\ \midrule
\multirow{6}{*}{57-bus}  & 1                   & 26897.45 & 26430.25 &  & 27349.94 & 26882.74 &  & 1.68\%                & 1.71\%                \\
                        & 2                   & 27349.94 & 26882.74 &  & 27349.94 & 26882.74 &  & 0.00\%                & 0.00\%                \\
                        & 3                   & 27021.15 & 26553.95 &  & 27337.18 & 26869.98 &  & 1.17\%                & 1.19\%                \\
                        & 4                   & 26615.36 & 26148.16 &  & 27033.77 & 26566.57 &  & 1.57\%                & 1.60\%                \\
                        & 5                   & 26895.53 & 26428.33 &  & 27349.94 & 26882.74 &  & 1.69\%                & 1.72\%                \\
                        & \textbf{Ave.}                &          &          &  &          &          &  & \textbf{1.22\%}                & \textbf{1.24\%}                \\ \midrule
\multirow{6}{*}{118-bus} & 1                   & 38147.19 & 36745.59 &  & 41024.91 & 39623.31 &  & 7.54\%                & 7.83\%                \\
                        & 2                   & 39050.1  & 37648.5  &  & 40688.4  & 39286.8  &  & 4.20\%                & 4.35\%                \\
                        & 3                   & 38917.13 & 37515.53 &  & 41024.91 & 39623.31 &  & 5.42\%                & 5.62\%                \\
                        & 4                   & 38363.42 & 36961.82 &  & 39887.18 & 38485.58 &  & 3.97\%                & 4.12\%                \\
                        & 5                   & 39887    & 38485.4  &  & 41024.91 & 39623.31 &  & 2.85\%                & 2.96\%                \\
                        & \textbf{Ave.}                &          &          &  &          &          &  & \textbf{4.80\%}                & \textbf{4.98\%}                \\ \bottomrule
\end{tabular}}
\end{table}

\begin{figure}[!tbp]
\begin{center}
\includegraphics[width=0.7\textwidth]{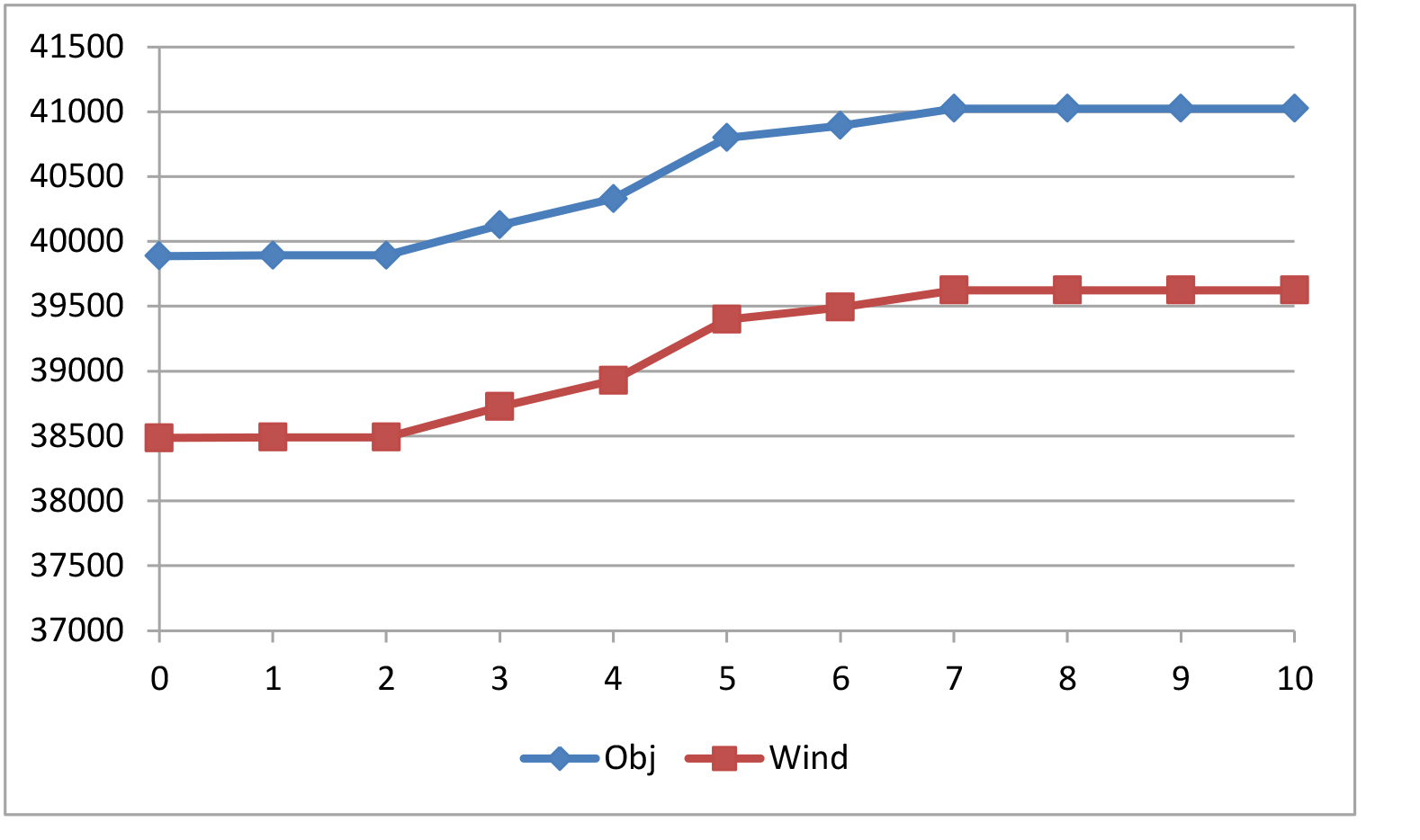}
\end{center}
\caption{Benefits of topology control on 118-bus system}
\label{fig:nbTC}
\end{figure}

In practice, switching off many existing lines could be very challenging to maintain a reliable power grid. So, it would be desired to just switching off a small number of lines for better performance. Actually, as we observe in our numerical study, it is often the case that switching off a few lines is sufficient to achieve the maximal benefits. For illustration, we perform a set of experiments on  the
5$^{\textrm{th}}$ instance of 118-bus system by imposing a cardinality constraint on the lines being switched off, i.e., $$\sum_{l\in \mathbf{L}}z_{l,t}\leq  \mathcal{K}$$
in lower level problems. We then compute the associated bilevel MIP model for different $\mathcal{K}$.
Results for $\mathcal{K}$ ranging from $0$ to $10$ are plotted in Figure \ref{fig:nbTC}, where ``Obj.''  and ``Wind'' are defined the same as those for Table \ref{table:tc}

It can be easily seen in Figure \ref{fig:nbTC} that the wind power absorption is non-decreasing with respect to $\mathcal{K}$. By allowing more lines to be switched off, up to 3\% more wind power integration can be achieved. Nevertheless, the marginal improvement becomes zero even more than 7 lines are allowed to be switched off. It suggests that by just applying topology control to a very small portion of lines, e.g., 7 out of 177 in this case, we will be able to enjoy the maximal benefits. We also would like to point out that there is no positive benefit when $\mathcal{K}\leq 2$. Such observation indicates that topology control should be applied in a way that multiple switching operations need to be coordinated, and it is unlikely to have a clear improvement from single line switching operations.

\section{Conclusions}
In this paper, we develop a novel bilevel mixed integer  optimization model to investigate wind power generation planning problem in an electricity market environment with topology control operations. As the lower level problem introduces binary variables to model line switching decisions, traditional KKT conditions based solution approach cannot be applied. To solve this challenging bilevel MIP, we customize a recent decomposition method and develop a couple of enhancement methods by making use of grid structure. Through computing instances obtained from typical IEEE test beds,  our solution methods demonstrate a strong solution capacity to this bilevel MIP model. Also, we do observe that applying topology control on a small number of lines could be very helpful to reduce wind power curtailment and improve the wind penetration level.

In the future, we would like to extend our model to consider stochastic wind and demands situations, so that we can more accurately describe random wind power generation and demand fluctuations. Certainly, it requires more advanced algorithm development. Given the significant computational improvement from the valid inequalities derived based on grid structure, we believe that stronger and more general valid inequalities and effective generation methods are worth pursuing, to further strengthen our computational capacity on practical bilevel MIP problems.


\bibliographystyle{unsrt}
\bibliography{My_Bib}   

\end{document}